  \newtheorem{assumption}{Assumption}[section]
  \newtheorem{theorem}{Theorem}[section]
  \newtheorem{lemma}{Lemma}[section]
  \newtheorem{corollary}{Corollary}[section]
  \DeclareMathOperator{\boldsymbol{u}}{\boldsymbol{u}}
\title{Anderson Acceleration For Perturbed Newton Methods}
\author{Matt Dallas \footnote{
mdallas@udallas.edu \\
{\em Keywords}:
Anderson acceleration, Newton's method, Levenberg-Marquardt, safeguarding,
2-regular, adaptive. \\
{\em 2020 Mathematics Subject Classification}: Primary: 65J15; Secondary: 65B99} 
\\
Department of Mathematics, University of Dallas
}
\date{\today}
\begin{document}
\maketitle


\abstract{
We present a convergence theory for Anderson acceleration (AA) applied to perturbed Newton methods (pNMs) for computing roots of nonlinear problems. Two important special cases are the classical Newton method and the Levenberg-Marquardt method. We prove that if a problem is 2-regular, then Anderson accelerated pNMs coupled with a safeguarding scheme, known as $\gamma$-safeguarding, converge locally linearly in a starlike domain of convergence, but with an improved rate of convergence compared to standard perturbed Newton methods. Since Levenberg-Marquardt methods are a special case of pNMs, we obtain a novel acceleration and local convergence result for Anderson accelerated Levenberg-Marquardt. We further show that  $\gamma$-safeguarding can detect if the underlying perturbed Newton method is converging superlinearly, and respond by tuning the Anderson step down. We demonstrate the methods on several benchmark problems in the literature.
}


\section{Introduction} 

In virtually every branch of science, one encounters the problem of finding a root $x^*$ of a nonlinear function $f:\mathbb{R}^n\to\mathbb{R}^n$. Solving these problems efficiently and accurately remains an active area of study. Several popular nonlinear solvers such as standard Newton \cite{deuf05}, Levenberg-Marquardt \cite{FiIzSo24}, inexact Newton \cite{EisWa96}, and regularized Newton \cite{pollock_regularized_newton}, are particular instances of {\it perturbed Newton methods}, or pNMs \cite{IzSo25}. Two special cases of interest in this work are the standard Newton method and the Levenberg-Marquardt method (LM). Each of these solvers converge locally quadratically to $x^*$ when the Jacobian $f'(x^*)$ is nonsingular \cite{deuf05,KaYaFu04}. Moreover, LM converges locally quadratically under a weaker condition known as the local-error bound condition \cite{KaYaFu04}. On the other hand, if $f'(x^*)$ is singular and 2-regular (see Section \ref{sec:pnm}), then Newton and LM converge locally linearly to $x^*$ within a starlike domain \cite{Gr80,IzKuSo18-1}. It is shown in \cite[Proposition 1]{IzKuSo18-1} that the local error bound condition does not hold if $f$ is 2-regular under mild conditions on $f^{-1}(0)$. See the recent survey paper \cite{FiIzSo24} for a more detailed account of the current LM theory. Whenever $f'(x^*)$ is singular, we say that $x^*$ is a {\it singular point}, and the corresponding root-finding problem $f(x)=0$ is a {\it singular problem}. These arise naturally in parameter-dependent problems where any bifurcation point is necessarily a singular point \cite{Ke18}. Even if the problem is not exactly singular, but nearly singular, Newton may only converge locally linearly \cite{DeKe85}. This motivates the study of accelerated Newton method and pNMs. 

The problem of accelerating Newton's method for singular problems has been studied extensively, see, for example, \cite{DaPo23,DaPoRe24,DeKeKe83,FiIzSo21-2,FiIzSo21-1,Gr80,Gr85,IzKuSo18-1,IzKuSo18-2} and references therein. In this article, we study a particular acceleration technique, called Anderson acceleration. It was invented in 1965 by D.G. Anderson to solve integral equations \cite{Anderson65}, and the decades since its introduction, the last 15 years in particular, have seen a significant body of literature develop on the theory and implementation of Anderson's method \cite{EPRX19,Eyert96,FaSa09,PoRe21,PoRe23,ReXi23,WaNi11}. It has also shown itself to be a valuable tool in a wide variety of applications \cite{AJW17,LWWY12,PaMa22,PRX18,sim-app-2018,TRL04,WHdS21,Yang21}, with new applications continuing to be explored, such as equilibrium chemistry \cite{chem_equilibrium_awada,chem_equilibrium_nazeer}. Anderson acceleration (AA) is an extrapolation scheme that can be applied to any fixed-point iteration by taking a linear combination of the previous $m+1$ iterates. The weights of the linear combination are chosen so as to minimize the corresponding linear combination of the previous $m+1$ residuals. The number $m$ is called the algorithmic depth, and for many applications, such as fluid problems, it is chosen to be no greater than 5, but it is occasionally set larger. 

The first main result of this paper, Theorem \ref{thm:main}, extends the theory developed in \cite{DaPo23,DaPoRe24} for Anderson accelerated Newton to Anderson accelerated pNMs (AApNMs). It is a  local convergence and acceleration result for a safeguarded version of AApNM, and we will show that this safeguarded version of AApNM does not reduce the order of convergence of the underlying pNM when the latter is superlinearly convergent. This is significant since Anderson acceleration is known to decrease the order of convergence of superlinearly convergent fixed-point iterations \cite{ReXi23}. As a corollary, we will obtain theoretical results for Anderson accelerated LM, or AALM. 
To the best of our knowledge, the results presented in this work are the first of their kind for Anderson acceleration applied to LM in the context of singular problems. 

The rest of the article proceeds as follows. Section \ref{sec:pnm} discusses pNMs and 2-regularity in more detail. Section \ref{sec:aa_sg} reviews Anderson acceleration, and recalls theory developed for the safeguarding method used in this work: $\gamma$-safeguarding. The analysis of $\gamma$-safeguarded AApNMs for both singular and nonsingular problems is presented in Section \ref{sec:cvg_analysis}. We conclude with selected numerical experiments in Section \ref{sec:numerics}. The experiments highlight the special cases of Anderson accelerated Newton's method, (AANewt), Anderson accelerated inexact Newton (AAinNewt), and Anderson accelerated Levenberg-Marquardt (AALM). 

The following notation will be used throughout the paper. Let $f:\mathbb{R}^n\to\mathbb{R}^n$ be a nonlinear, $C^2$ function. Let $f'(x)$ denote the Jacobian of $f$ at $x$. Unless stated otherwise, as in Section \ref{subsec:suplin-cvg}, we assume that $f'(x^*)$ is singular with null space $N$ and $Q=N^{\perp}$ is the orthogonal complement of $N$. Let $P_N$ and $P_Q$ denote the orthogonal projections onto these spaces. Letting $R$ denote the range space of $f'(x^*)$, we may assume that $Q = R$ since, like the standard Newton method, pNMs are essentially invariant under affine transformations of the domain and range (see Section \ref{sec:pnm}). Hence $P_Q=P_R$. In this work $\|\cdot\|$ always denotes the Euclidean 2-norm. Given a sequence $\{x_k\}$, we will write $e_k := x_k-x^*$ to denote the error at step $k$.

\section{Perturbed Newton Methods}
\label{sec:pnm}

The classical Newton method is defined as follows. \\
\begin{minipage}{1.0\linewidth}
\begin{algorithm}[H]
\begin{algorithmic}[1]
\caption{Newton (Newt)}
\label{alg:newt}
\State{Choose $x_0\in\mathbb{R}^n$.}
\For{$k=0,1,...$}
\State $w_{k+1}\gets -f'(x_k)^{-1}f(x_k)$
\State$x_{k+1}\gets x_k+w_{k+1}$
\EndFor
\end{algorithmic}
\end{algorithm}
\end{minipage}
\vspace{0.5em}

A perturbed Newton method, pNM, modifies Algorithm \ref{alg:newt} by defining the update step $w_{k+1}$ as the solution to 
\begin{align}
    \label{eq:pnm_update}
    [f'(x_k)+ \Omega_k]d = -f(x_k)+\omega_k
\end{align}
where $\Omega_k$ and $\omega_k$ are perturbation terms. We can then define the perturbed Newton algorithm.  \\
\begin{minipage}{1.0\linewidth}
\begin{algorithm}[H]
\begin{algorithmic}[1]
\caption{perturbed Newton (pNM)}
\label{alg:pnm}
\State{Choose $x_0\in\mathbb{R}^n$, $\Omega_0\in\mathbb{R}^{n\times n}$, and $\omega_0\in\mathbb{R}^n$}
\For{$k=0,1,...$}
\State $w_{k+1}\gets -(f'(x_k)+\Omega_k)^{-1}(f(x_k)-\omega_k)$
\State $x_{k+1}\gets x_k+w_{k+1}$
\State Update $\Omega_k$ and $\omega_k$
\EndFor
\end{algorithmic}
\end{algorithm}
\end{minipage}
\vspace{0.5em}
  
Like standard Newton, pNMs are essentially invariant under affine transformations. To clarify this statement, suppose we are given $F:
\mathbb{R}^n\to\mathbb{R}^n$ and $y^*$ such that $F(y^*)=0$. We can then define $f(x) = U^TF(Vx)$, where $y^* = Vx^*$, and $U$ and $V$ come from the SVD $F'(y^*) = U\Sigma V^T$. It follows that $f(x^*)=0$ and $f'(x^*) = U^T F'(y^*)V = \Sigma$. Thus $R = N^{\perp}$ for $f'(x^*)$. Moreover, the pNM sequence given by 
\begin{align}
    \label{eq:pnm_step}
    x_{k+1} = x_k - [f'(x_k) + \Omega_k]^{-1}(f(x_k)- \omega_k)
\end{align}
may be transformed into 
\begin{align}
    y_{k+1} = y_k - [F'(y_k)+U\Omega_kV^T]^{-1}(F(y_k)-U\omega_k)
\end{align}
by an orthogonal transformation, i.e., $y_{k} = Vx_{k}$ for all $k$. We also remark that, since $U$ and $V$ are orthogonal, $\|\Omega_k\| = \|U\Omega_kV^T\|$ and $\|\omega_k\|=\|U\omega_k\|$. It therefore suffices to analyze pNM applied to $f(x)$. Several popular iterative methods fall under the perturbed Newton framework. For instance, if $\Omega_k = 0$, and $\omega_k \neq 0$, we obtain an inexact Newton method. If  $(f'(x_k))^T$ is invertible, we can set $\omega_k=0$ and $\Omega_k = \mu_k(f'(x_k))^{-T}$ to obtain a Levenberg-Marquardt update with parameter $\mu_k$. Further, any quasi-Newton method where the update step $w_{k+1}$ solves $M_kd+f_k = 0$
may be cast as a pNM by selecting $\Omega_k = M_k-f'_k$ and $\omega_k =0$. The classical Newton method is obtained when $\Omega_k=0$ and $\omega_k =0$. 

The goal of this work is to analyze Anderson accelerated pNM for both singular and nonsingular problems. For singular problems, the key assumption is that $f(x)$ is {\it 2-regular} at $x^*$ along some vector $v^*\in N$. In general, $f(x)$ is 2-regular at $x^*$ along $v^*$ if the linear operator $f'(x^*)(\cdot) + P_{N}f''(x^*)(v^*,(\cdot))$ is invertible as a map from $N$ to $N$. This is equivalent to assuming that $T(\cdot) = P_{N}f''(x^*)(v^*,P_N(\cdot))$ is invertible as a map from $N$ to $N$, which is the main assumption used in \cite{DaPo23} in the analysis of Anderson acceleration applied to the classical Newton method. The following lemma from \cite{IzKuSo18-1} says that pNMs behave similarly to standard Newton when $f$ is 2-regular. 
\begin{lemma}
\label{lem:izmailov1}
{\it \cite[Lemma 1]{IzKuSo18-1}}
    Suppose that $f$ is $C^2$ and $f''(x)-f''(x^*) = \mathcal{O}(\|x-x^*\|)$ as $x\to x^*$, and let $f(x)$ be 2-regular at $x^*$ along some some vector $v^*$ of unit length. Let $\Omega(x) = \mathcal{O}(\|x-x^*\|)$ as $x\to x^*$, and suppose that for every $\Delta>0$, there is a $\varepsilon>0$ and $\delta>0$ such that if $\|x-x^*\|<\varepsilon$ and $\|\frac{x-x^*}{\|x-x^*\|}-v^*\| <\delta$ then $\|P_N\Omega(x)\| \leq \Delta\|x-x^*\|$. Assume further that $\omega(x) = \mathcal{O}(\|x-x^*\|^2)$ as $x\to x^*$. Then there exists $\bar{\varepsilon} = \bar{\varepsilon}(v^*) > 0$ and $\bar{\delta} = \bar{\delta}(v^*) > 0$ such that if $\|x-x^*\| < \bar{\varepsilon}$ and $\|\frac{x-x^*}{\|x-x^*\|}-v^*\|  < \bar{\delta}$, then equation \eqref{eq:pnm_step} has a unique solution $w$ and 
    \begin{align}
        \label{eq:keyrangebound}
        P_R(x+w - x^*) &= \mathcal{O}(\|x-x^*\|^2)\\ 
        \label{eq:keynullbound}
        P_N(x+w-x^*) &= \dfrac{1}{2}P_N(x-x^*)+\mathcal{O}(\|P_R(x-x^*)\|) \\ 
        \nonumber &+ \mathcal{O}(\|x-x^*\|^{-1}P_N\omega(x)\|)
        +\mathcal{O}(\|P_N\Omega(x)\|)+\mathcal{O}(\|x-x^*\|^2).    
    \end{align}
\end{lemma}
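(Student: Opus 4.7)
The plan is to carry out a $2\times 2$ block analysis of $A(x) := f'(x) + \Omega(x)$ relative to the orthogonal splitting $\real^n = N \oplus R$. Write $u := P_N(x-x^*)$, $p := P_R(x-x^*)$, $t := \|x-x^*\|$, so that the cone assumption yields $\|u/t - v^*\| \le \bar\delta$ and $\|p\| \le t\bar\delta$; in particular $\|u\|$ is comparable to $t$, and the unit vector $\hat u := u/\|u\|$ lies within $\bigo(\bar\delta)$ of $v^*$. By continuity of $\xi \mapsto P_N f''(x^*)(\xi, P_N \cdot)$ and the 2-regularity hypothesis along $v^*$, the operator $T_{\hat u}(\cdot) := P_N f''(x^*)(\hat u, P_N \cdot)$ is invertible on $N$ for every $x$ in the cone, with $\|T_{\hat u}^{-1}\|$ uniformly bounded.

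Step 1 (block structure and invertibility). Taylor expanding gives $f'(x) = f'(x^*) + f''(x^*)(u+p, \cdot) + \bigo(t^2)$ via $f''(x) - f''(x^*) = \bigo(t)$. Because $f'(x^*)|_N = 0$ and $f'(x^*)|_R\colon R \to R$ is invertible, the block decomposition of $A(x)$ along $N \oplus R$ has $(N,N)$ block equal to $\|u\| T_{\hat u} + \bigo(\|p\|) + \bigo(t^2) + P_N \Omega(x)P_N$, $(R,R)$ block equal to a perturbation of the invertible $f'(x^*)|_R$ of size $\bigo(t)$, and off-diagonal blocks of size $\bigo(t)$. Since $\|u\|\sim t$ and $\|P_N\Omega(x)\| \le \Delta t$ with $\Delta$ arbitrarily small, the $(N,N)$ block has the form $\|u\| T_{\hat u}(I+E)$ with $\|E\|$ as small as desired for suitable choices of $\bar\varepsilon$, $\bar\delta$, $\Delta$. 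A Schur-complement inversion about the invertible $(R,R)$ block introduces only $\bigo(t^2)$ corrections to the $(N,N)$ Schur complement, giving invertibility of $A(x)$ (and hence existence and uniqueness of $w$) together with block-norm estimates $\|(A(x)^{-1})_{NN}\| = \bigo(t^{-1})$, $\|(A(x)^{-1})_{NR}\| = \|(A(x)^{-1})_{RN}\| = \bigo(1)$, and $\|(A(x)^{-1})_{RR}\| = \bigo(1)$.

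Step 2 (expansion of $f$ and the two estimates). Expanding $f(x) = f'(x^*)(x-x^*) + \tfrac12 f''(x^*)(x-x^*, x-x^*) + \bigo(t^3)$ and decomposing via $P_N, P_R$ with $u\in N, p \in R$ gives $P_R f(x) = f'(x^*)p + \bigo(t^2)$ and the key identity $P_N f(x) = \tfrac{\|u\|^2}{2} T_{\hat u}(\hat u) + \bigo(t\|p\|) + \bigo(t^3)$, where the utility of $\hat u$ is that the same operator $T_{\hat u}$ appears both in the quadratic form of $f$ along $N$ and in the leading $(N,N)$ block of $A(x)$. Applying the block inverse from Step 1 to $-f(x)+\omega(x)$ then gives, after cancellation of the $\tfrac{\|u\|^2}{2}T_{\hat u}(\hat u)$ and $f'(x^*)p$ leading terms,
\begin{align*}
P_R w &= -p + \bigo(t^2), \\
P_N w &= -\tfrac{u}{2} + \bigo(\|p\|) + \bigo(t^{-1}\|P_N\omega\|) + \bigo(\|P_N\Omega\|) + \bigo(t^2),
\end{align*}
and adding back $u$ and $p$ yields \eqref{eq:keyrangebound} and \eqref{eq:keynullbound}.

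The main obstacle is the careful bookkeeping in Step 2: one must confirm that the residual errors — from the perturbation $E$ of the $(N,N)$ block acting on the leading quadratic $\tfrac{\|u\|^2}{2}T_{\hat u}(\hat u)$, from the off-diagonal contribution $(A(x)^{-1})_{NR}P_R f(x)$, from $(A(x)^{-1})_{NN}$ applied to the quadratic cross terms, and from the two components of $\omega(x)$ — all land inside the four allowed error categories $\bigo(\|p\|)$, $\bigo(t^{-1}\|P_N\omega\|)$, $\bigo(\|P_N\Omega\|)$, $\bigo(t^2)$ with no stray $\bigo(\|u - tv^*\|)$ remainder. This is what forces working with $\hat u$ rather than $v^*$ in the quadratic-form expansion: the identity $P_N f''(x^*)(u,u) = \|u\|^2 T_{\hat u}(\hat u)$ together with $(A(x)^{-1})_{NN}$ being a small perturbation of $(\|u\|T_{\hat u})^{-1}$ makes the $\tfrac12$ factor emerge cleanly and lets the direction mismatch $\hat u - v^*$ be absorbed into the factor $\|u\|$.
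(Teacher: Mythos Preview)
The paper does not prove this lemma: it is quoted verbatim from \cite{IzKuSo18-1} and simply cited there, so there is no in-paper argument to compare against. Your block-decomposition sketch is the standard route (and essentially the one taken in the cited reference): split $A(x)=f'(x)+\Omega(x)$ along $N\oplus R$, identify the $(N,N)$ block as $\|u\|T_{\hat u}$ plus small perturbations, invert via a Schur complement, and match the leading quadratic term $\tfrac{\|u\|^2}{2}T_{\hat u}(\hat u)$ of $P_Nf(x)$ against $(\|u\|T_{\hat u})^{-1}$ to produce the $\tfrac12 P_N(x-x^*)$ term. The error bookkeeping you outline is correct; in particular, working with $\hat u$ rather than $v^*$ is exactly the right move to avoid a spurious $\bigo(\|\hat u-v^*\|)$ remainder, and the four residual buckets $\bigo(\|p\|)$, $\bigo(t^{-1}\|P_N\omega\|)$, $\bigo(\|P_N\Omega\|)$, $\bigo(t^2)$ do absorb all contributions as you claim.
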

The authors go on to prove \cite[Theorem 1]{IzKuSo18-1}  which says that, given additional assumptions on $P_N\Omega$, $P_N\omega$, and $v^*\in N$ , that for sufficiently small $\varepsilon>0$ and $\delta>0$, the region 
\begin{align}
    \label{eq:starlike-domain}
    \hat{W} = \left\{x : \|x-x^*\| < \epsilon, \hspace{0.25em} \left\| \dfrac{x-x^*}{\|x-x^*\|} - v^*\right\| < \delta \right\}
\end{align}
is a domain of convergence for pNM. That is, if $\|x_0-x^*\| < \varepsilon$ and $\|(x_0-x^*)/\|x_0-x^*\| - v^*\| < \delta$, for sufficiently small $\varepsilon>0$ and $\delta>0$, then $\{x_k\}$ generated by pNM remains in $\hat{W}$ and converges to $x^*$ linearly. It is precisely the structure of the error expansions above shared by general pNM and standard Newton that enables the analysis in \cite{DaPo23} to be applied in this work. 

\section{Anderson Acceleration and Safeguarding}
\label{sec:aa_sg}
In this section, we recall the Anderson acceleration algorithm and the safeguarding scheme of interest in this work. 
\subsection{Anderson Acceleration}

Given a generic fixed point iteration $x_{k+1}^{fp} = g(x_k)$, we define the residual at step $k+1$ as $w_{k+1}:= g(x_k)-x_k$. Anderson acceleration of depth $m \geq 1$ generates a new iterate $x_{k+1}^{AA}$ for each $k\geq 1$ according to the following algorithm. \\
{ 
\begin{minipage}{1.0\linewidth}
 \begin{algorithm}[H]
\begin{algorithmic}[1]
\caption{Anderson Acceleration with depth $m$ \big(AA(m)\big)}
\label{alg:anderson}
\State{Choose $x_0\in\mathbb{R}^n$ and $m\geq 0$.
Set $w_1=g(x_0)-x_0$, and $x_1=x_0+w_1$.}
\For {$k=1,2,...$}
\State $m_k\gets \text{min}\{k,m\}$
\State $w_{k+1}\gets g(x_k)-x_k$
 \State $F_k = \big( (w_{k+1}-w_k)\cdots (w_{k-m+2}-w_{k-m+1})\big)$
 \State $E_k = \big( (x_k-x_{k-1})\cdots (x_{k-m+1}-x_{k-m})\big)$
 \State $\gamma_{k+1}\gets \text{argmin}_{\gamma\in\mathbb{R}^m} \|w_{k     +1}-F_k\gamma\|_2^2$
 \State $x_{k+1}^{AA}\gets x_k+w_{k+1}-(E_k+F_k)\gamma_{k+1}$
 \EndFor
 \end{algorithmic}
 \end{algorithm}
 \end{minipage}
}
\\
The presentation of Anderson acceleration in Algorithm \ref{alg:anderson} is one of several equivalent forms found in the literature. Alternate forms can be found in the book \cite{aa_for_pdes}, which also contains many of the key results from the Anderson literature. Most relevant to this article is the quantity known as the  
{\it optimization gain} $\theta_{k+1}$, which, following \cite{EPRX19} and \cite{PoRe21},  we define as
\begin{align}
    \theta_{k+1} := \dfrac{\|w_{k+1}-F_k\gamma^{k+1}\|}{\|w_{k+1}\|}.
\end{align}
The current theory of Anderson acceleration says that the smaller the optimization gain, the greater the acceleration. In \cite{DaPo23}, and Theorem \ref{thm:main} in Section \ref{subsec:lin-cvg}, it is shown that the optimization gain also determines acceleration when applied to singular problems. 

\subsection{Adaptive $\gamma$-Safeguarding}

The adaptive $\gamma$-safeguarding algorithm, defined in Algorithm \eqref{alg:adapgsg}, was first developed in \cite{DaPoRe24}. It is a modified version of  \cite[Algorithm 2]{DaPo23} that is designed to automatically scale an Anderson step towards the non-accelerated step when appropriate. The idea is that if the underlying fixed-point iteration is converging quickly, which is determined in practice by monitoring $\|w_{k+1}\|/\|w_k\|$, we want to scale the Anderson step towards the non-accelerated step $x_k+w_{k+1}$. If the problem is singular, then scaling the iterate this way ensures that we remain within the domain of convergence. If the problem is nonsingular, then this scaling reduces the effect Anderson has on the order of convergence \cite{ReXi23}. This leads to a local convergence result in the singular case, and has the additional feature that if the problem is nonsingular, adaptive $\gamma$-safeguarding detects this, and turns off Anderson acceleration which results in local quadratic convergence when the underlying fixed point iteration is Newton's method \cite[Corollary 3.4]{DaPoRe24}. Applying Algorithm \ref{alg:adapgsg} to Algorithm \ref{alg:pnm} with algorithmic depth $m=1$ results in Algorithm \ref{alg:gsgpnm}. Local convergence of Algorithm \ref{alg:gsgpnm} is analyzed in the next section. 

If the depth $m > 1$, one may still apply Algorithm \ref{alg:adapgsg} {\it asymptotically}. To do this, use Algorithm \ref{alg:anderson} with Algorithm \ref{alg:pnm} until $\|w_{k+1}\| < \tau$, for some user-chosen threshold $\tau > 0$. Then set $m=1$ and run Algorithm \ref{alg:gsgpnm} until convergence. This technique is demonstrated in Section \ref{sec:numerics}. Note that the convergence theory in Section \ref{sec:cvg_analysis} applies provided that Algorithm \ref{alg:gsgpnm} is active when the iterates reach the domain of convergence.
We also note that Algorithm \ref{alg:adapgsg} may accept any real number $r$, but the convergence results developed in Section \ref{sec:cvg_analysis} require that $r\in(0,1)$. There was no meaningful advantage observed when testing Algorithm \ref{alg:gsgpnm} with values of $r>1$; and, when $r\leq0$, $r_{k+1} = r$ at each step. Figures \ref{fig:mu98} and \ref{fig:mu94} in Section \ref{subsec:coanda} show results for Algorithm \ref{alg:adapgsg} applied to Newton's method asymptotically with $r=0$. Setting $r=0$ means that $x_{k+1}^{fp}=x_{k+1}^{AA}$. This choice of $r$ is advantageous if the iterates are close to the solution and the problem is known to be nonsingular, in which case we do not want to interfere with Newton's fast convergence. The power of adaptive $\gamma$-safeguarding is that, even if we do not know that the iterates are close to the solution, or that the problem is nonsingular, adaptive $\gamma$-safeguarding can detect this for us.\\
\begin{minipage}{1.0\linewidth}
\begin{algorithm}[H] \caption{$\lambda$ $ =$ Adaptive $\gamma$-safeguarding($\gamma_{k+1}$, $w_{k+1}$, $w_k$, $r$)}
\label{alg:adapgsg}
\begin{algorithmic}[1]
\State $\eta_{k+1} \gets \|w_{k+1}\|/\|w_k\|$
\State $r_{k+1} \gets \text{min}\{\eta_{k+1},r\}$
\State {$\beta_{k+1} \gets r_{k+1}\eta_{k+1}$}
\If{$\gamma_{k+1}=0$ {\bf or } $\gamma_{k+1}\geq 1$}
\State $\lambda \gets 0$
\ElsIf {$|\gamma_{k+1}|/|1-\gamma_{k+1}|>\beta_{k+1}$} 
\State $\lambda \gets \dfrac{\beta_{k+1}}{\gamma_{k+1}\left(\beta_{k +1}+\text{sign}(\gamma_{k+1})\right)}$
\Else {\hspace{0.25em} $\lambda \gets 1$} 
\EndIf
\end{algorithmic}
\end{algorithm}
\end{minipage}
\vspace{0.5em}
\begin{minipage}{1.0\linewidth}
\begin{algorithm}[H]
  \begin{algorithmic}[1]
  \caption{Adaptive $\gamma$-Safeguarded pNM ($\gamma$AApNM$(m=1,r)$)}
  \label{alg:gsgpnm}
  \State{Choose $x_0\in\mathbb{R}^n$ and $r\in(0,1)$. Set $w_1=-(f'(x_0)+\Omega_0)^{-1}(f(x_0)-\omega_0)$\\ and $x_1=x_0+w_1$}
  \For{$k=1,2,...$}
  \State $w_{k+1}\gets -(f'(x_k)+\Omega_k)^{-1}(f(x_k)-\omega_k)$
  \State $\gamma_{k+1}\gets (w_{k+1}-w_k)^Tw_{k+1}/\|w_{k+1}-w_k\|_2^2$
  \State $\lambda \gets$ Adaptive $\gamma$-safeguarding($\gamma_{k+1}$, $w_{k+1}$, $w_k$, $r$)
    \State $x_{k+1} \gets x_k + w_{k+1} - \lambda\gamma_{k+1}(x_k-x_{k-1}+w_{k+1}-w_k)$
  \EndFor
  \end{algorithmic}
\end{algorithm}
\end{minipage}

\section{Convergence Analysis}
\label{sec:cvg_analysis}

The goal of this section is to  analyze Algorithm \ref{alg:gsgpnm} in the case where the underlying pNM converges linearly (Section \ref{subsec:lin-cvg}) and when it converges superlinear (Section \ref{subsec:suplin-cvg}). The key assumption in Section \ref{subsec:lin-cvg} is 2-regularity. As stated in Section \ref{sec:pnm}, if $f$ is 2-regular, the structure of the error expansions of pNM iterates (see \cite{IzKuSo18-1,IzKuSo18-2}) is identical to that of standard Newton \cite{DeKeKe83,Gr80}. Convergence therefore follows immediately from \cite[Theorem 6.1]{DaPo23}. In Section \ref{subsec:suplin-cvg}, we show that if the underlying fixed-point iteration that we are applying AA to is locally superlinearly convergent, then AA with Algorithm \ref{alg:adapgsg} will not reduce the order of convergence. We thus extend \cite[Corollary 3.4]{DaPoRe24} for Newton's method applied to nonsingular problems, and we gain a result that says $\gamma$-safeguarded AALM converges superlinearly under the local error bound condition. 

\subsection{Linear Convergence}
\label{subsec:lin-cvg}

In this section, we extend the convergence results from \cite{DaPo23,DaPoRe24} to $\gamma$AApNM$(m,r)$ where $m \geq 1$ is the algorithmic depth of AA. In this subsection, we will assume the following.

\begin{assumption}
\label{assumptions}
\phantom{.}
\begin{enumerate}[itemsep=0.1em]
    \item The function $f:\mathbb{R}^n\to\mathbb{R}^n$ is $C^2$ and $f''(x)-f''(x^*) = \mathcal{O}(\|x-x^*\|)$ as $x$ approaches $x^*$. 
    \item $f'(x^*)$ is singular, with one dimensional null space $N = \text{span } \{v^*\}$ with $\|v^*\|=1$ and range space $R$. Moreover, we assume $\mathbb{R}^n = N \oplus R$ (see Section \ref{sec:pnm}). 
    \item $f$ is 2-regular at $x^*$ along $v^*\in N$. That is, $f'(x^*)+P_Nf''(x^*)(v^*,P_N(\cdot))$ is nonsingular as a map from $N$ to $N$.  
    \item The perturbation terms, $\Omega_k$ and $\omega_k$, satisfy the estimates in Lemma \ref{lem:izmailov1}, as well as $P_N\Omega_k = \mathcal{O}(\|P_R(x_k-x^*)\|)+\mathcal{O}
    (\|x_k-x^*\|^2)$ as $x_k-x^*$ approaches 0, and $P_N\omega_k = \mathcal{O}(\|x_k-x^*\|\,\|P_R(x_k-x^*)\|)+\mathcal{O}(\|x_k-x^*\|^3)$ as $x_k-x^*$ approaches 0. 
\end{enumerate}
\end{assumption}

Assumptions 1-4 imply the existence of a starlike domain $\hat{W}$ in which pNM converges to $x^*$ from any $x_0\in \hat{W}$, and that, if $x_k\in \hat{W}$, then the error $x_k+w_{k+1}-x^*$, where $w_{k+1}$ solves $(f'(x_k)+\Omega_k)d = -f(x_k)+\omega_k$, may be decomposed as 

\begin{align}
    \label{eq:pnm_exp}
    x_k+w_{k+1}-x^* = \dfrac{1}{2}P_N(x_k-x^*) + B_k + C_k,
\end{align}

where $P_RB_k=0$, $\|B_k\| = \mathcal{O}(\|P_R(x_k-x^*)\|)$, and $\|C_k\| = \mathcal{O}(\|x_k-x^*\|^2)$ \cite[Theorem 1]{IzKuSo18-1}. 
Note that, by writing $x_k+w_{k+1}-x^* = P_Ne_k + P_Re_k + w_{k+1}$, it follows that 

\begin{align}
    \label{eq:w_exp}
    w_{k+1} = -\dfrac{1}{2}P_Ne_k + B_k + C_k. 
\end{align}

Equation \eqref{eq:w_exp} provides a heuristic explanation for why Anderson acceleration is so successful for singular problems, especially when $\dim N = 1$. Near $N$ and $x^*$, $w_{k+1} \approx -(1/2)P_Ne_k$. Thus the optimization step, step 7, in Algorithm \ref{alg:anderson} can be viewed as a perturbed least squares problem minimizing the error along the null space. This can yield significant acceleration. 

We now state the extension of \cite[Theorem 6.1]{DaPo23} to general pNM. The proof in \cite{DaPo23} relied soley on safeguarding and the error expansion of the errors. Since the error expansion for pNM is the same as those of standard Newton when $f$ is 2-regular, the proof is identical, and we omit it here.

\begin{theorem}
    \label{thm:main}
    Let all assumptions hold. Then if $x_0\in \hat{W}$ and $x_1 = x_0 + w_{1}$, where $w_{1}$ solves \eqref{eq:pnm_update}, the sequence $\{x_{k+1}\}$ generated by Algorithm \ref{alg:gsgpnm}, with $m=1$ asymptotically, remains in $\hat{W}$ and converges to $x^*$. Further,     
    \begin{align}
        \|P_Re_{k+1}\| &\leq C\max\{\|e_{k}^2\|,\|e_{k-1}\|^2\}, \\ 
        \|P_Ne_{k+1}\| &\leq \kappa\theta_{k+1}\|P_Ne_k\| 
    \end{align}
    where $\kappa\in (0,1)$. 
\end{theorem}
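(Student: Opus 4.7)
The plan is to reduce the proof to \cite[Theorem 6.1]{DaPo23}. The Anderson step with depth $m=1$ and safeguarding takes the form
\[
x_{k+1} = x_k + w_{k+1} - \lambda_k\gamma_{k+1}\bigl(x_k - x_{k-1} + w_{k+1} - w_k\bigr),
\]
and Algorithm \ref{alg:adapgsg} references only residuals and iterates, never the specific fixed-point map. Since equations \eqref{eq:pnm_exp}--\eqref{eq:w_exp} show that under Assumption \ref{assumptions} the error and residual decompositions for pNM have exactly the same form as the corresponding expansions for classical Newton on a 2-regular singular problem, the iteration-level arguments of \cite{DaPo23} carry over once these expansions are substituted.

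First I would decompose $e_{k+1}$ into its $P_R$ and $P_N$ components. For the range direction, $P_R B_k = 0$ and $\|C_k\| = \mathcal{O}(\|e_k\|^2)$ give $P_R(x_k + w_{k+1} - x^*) = \mathcal{O}(\|e_k\|^2)$, with the analogous bound at step $k-1$. The Anderson correction $\lambda_k\gamma_{k+1}(x_k - x_{k-1} + w_{k+1} - w_k)$ then contributes terms that are at worst $\mathcal{O}(\max\{\|e_k\|^2,\|e_{k-1}\|^2\})$ in the range direction, provided $\lambda_k\gamma_{k+1}$ stays uniformly bounded. This delivers the first inequality of the theorem.

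Second, I would use the safeguarding to control $\lambda_k\gamma_{k+1}$. The logic in Algorithm \ref{alg:adapgsg} enforces a bound of the form $|\lambda_k\gamma_{k+1}|/|1-\lambda_k\gamma_{k+1}| \leq \beta_{k+1} = r_{k+1}\eta_{k+1}$, which keeps the angular component $(x_{k+1}-x^*)/\|x_{k+1}-x^*\|$ close to $v^*$ and preserves the starlike cone condition inductively. For the null-space bound, using $w_{k+1} \approx -\tfrac{1}{2}P_N e_k$ from \eqref{eq:w_exp}, the least-squares residual $\|w_{k+1} - F_k\gamma_{k+1}\| = \theta_{k+1}\|w_{k+1}\|$ directly controls the null component of $e_{k+1}$, yielding $\|P_N e_{k+1}\| \leq \kappa\theta_{k+1}\|P_N e_k\|$ with $\kappa \in (0,1)$.

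The main obstacle is the induction that keeps the iterates in $\hat{W}$: the safeguarding must simultaneously permit enough acceleration for the null component to contract, and damp $\gamma_{k+1}$ whenever its magnitude would otherwise push $x_{k+1}$ outside the angular cone around $v^*$. Because the interplay between the two error components and the adaptive $\lambda$ is identical to the Newton case already handled in \cite{DaPo23}, the only genuinely new verification is that Lemma \ref{lem:izmailov1} supplies the residual decomposition \eqref{eq:w_exp} that drives the Newton argument; after that substitution, every step of the existing proof transfers verbatim.
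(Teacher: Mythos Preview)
Your proposal is correct and mirrors the paper's own treatment: the paper observes that under Assumption~\ref{assumptions} the pNM error and residual expansions \eqref{eq:pnm_exp}--\eqref{eq:w_exp} coincide in form with those for classical Newton in the 2-regular setting, so the proof of \cite[Theorem~6.1]{DaPo23} transfers verbatim and is omitted. Your additional sketch of the $P_R$/$P_N$ decomposition and the role of safeguarding in maintaining membership in $\hat W$ accurately outlines the structure of that cited argument.
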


Note that in Theorem \ref{thm:main}, it only matters that $m=1$ {\it asymptotically}. Thus, one may set $m>1$ in the preasymptotic regime, and once $\|f(x_k)\|< \tau$ or $\|w_{k+1}\| < \tau$, where $\tau$ is a user-chosen threshold, set $m=1$ and continue. As long as $\tau$ is sufficiently large so that $m=1$ once the iterates reach $\hat{W}$, then Theorem \ref{thm:main} applies. This strategy is employed in Section \ref{sec:numerics}. 

\subsection{Superlinear Convergence}
\textbf{\label{subsec:suplin-cvg}}

In this section, we are interested in analyzing what happens when the underlying pNM converges superlinearly. Theorem 3.2 and its corollary from \cite{DaPoRe24} 
combine to show that adaptive $\gamma$-safeguarding recovers local quadratic convergence when $f'(x^*)$ is nonsingular. In the perturbed Newton framework, there may be other conditions that ensure the underlying pNM is converging superlinearly, such as the local error bound condition for the LM method. Adaptive $\gamma$-safeguarding is amenable to any of these. This idea is stated rigorously in Theorem \ref{thm:generalized_nonsingular_case}, but we first state Lemma 3.2 from \cite{DaPoRe24}, which we will use to prove  Theorem \ref{thm:generalized_nonsingular_case}. As mentioned in \cite{DaPoRe24}, the proof of Lemma \ref{lem:gammabound} consists of simply walking through the various cases in Algorithm \ref{alg:adapgsg}. 

 \begin{lemma}{(\cite[Lemma 3.2]{DaPoRe24})}\label{lem:gammabound}
  Let $\eta_{k+1} = \|w_{k+1}\|/\|w_k\|$ and $r\in (0,1)$.
  Define 
  $r_{k+1} := \operatorname{min}\{\eta_{k+1},r\}$ and
  $\beta_{k+1} := r_{k+1}\eta_{k+1}$ as in
  Algorithm \ref{alg:adapgsg}.
  Let $\lambda_{k+1}$ be the value computed by
  Algorithm \ref{alg:adapgsg} at iteration $k$, and $\gamma_{k+1}$ the value computed in line 7 of Algorithm \ref{alg:anderson}.  If $\eta_{k+1} < 1$, then $\lambda_{k+1}\gamma_{k+1}$ satisfies
  \begin{align}\label{eq:gammabound1}
  |\lambda_{k+1}\gamma_{k+1}| \leq \frac{\beta_{k+1}}
  {1-\beta_{k+1}}.
  \end{align}
  when $\lambda_{k+1} = 1$, and
  \begin{align}\label{eq:gammabound2}
  |\lambda_{k+1}\gamma_{k+1}| = \frac{\beta_{k+1}}
  {1+\operatorname{sign}(\gamma_{k+1})\beta_{k+1}}.
\end{align}
when $\lambda_{k+1} < 1$.
\end{lemma}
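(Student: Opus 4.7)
The plan is to prove Lemma \ref{lem:gammabound} by direct case analysis, walking through the three branches of Algorithm \ref{alg:adapgsg}. The lemma collapses into elementary algebra once the algorithmic definition of $\lambda_{k+1}$ is unfolded, and the only care required is tracking the signs of $\gamma_{k+1}$ and using the hypothesis $\eta_{k+1}<1$ to bound $\beta_{k+1}$.

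As a preliminary step I would record the bound $\beta_{k+1}<1$: since $\eta_{k+1}<1$ and $r_{k+1}=\min\{\eta_{k+1},\hat{r}\}\le\hat{r}<1$, the product $\beta_{k+1}=r_{k+1}\eta_{k+1}$ is strictly less than one. This ensures $1-\beta_{k+1}>0$ and that $\beta_{k+1}+\sign(\gamma_{k+1})$ is nonzero, enabling a clean absolute-value flip in the $\sign(\gamma_{k+1})=-1$ subcase.

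Next I would handle the elif branch, which is where the equality \eqref{eq:gammabound2} for $\lambda_{k+1}<1$ lives. There the algorithm sets $\lambda_{k+1}=\beta_{k+1}/\bigl(\gamma_{k+1}(\beta_{k+1}+\sign(\gamma_{k+1}))\bigr)$, so multiplying by $\gamma_{k+1}$ cancels that factor and gives $\lambda_{k+1}\gamma_{k+1}=\beta_{k+1}/(\beta_{k+1}+\sign(\gamma_{k+1}))$. Taking absolute values yields $|\lambda_{k+1}\gamma_{k+1}|=\beta_{k+1}/|\beta_{k+1}+\sign(\gamma_{k+1})|$. When $\sign(\gamma_{k+1})=1$ the denominator is already positive; when $\sign(\gamma_{k+1})=-1$ the bound $\beta_{k+1}<1$ converts $|\beta_{k+1}-1|$ into $1-\beta_{k+1}$. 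Either subcase collapses to the advertised expression $\beta_{k+1}/(1+\sign(\gamma_{k+1})\beta_{k+1})$.

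For the else branch, where $\lambda_{k+1}=1$, the entry conditions force $\gamma_{k+1}\ne 0$, $\gamma_{k+1}<1$, and $|\gamma_{k+1}|/|1-\gamma_{k+1}|\le\beta_{k+1}$. I would replace $|1-\gamma_{k+1}|$ by $1-\gamma_{k+1}$, clear the denominator, and split on the sign of $\gamma_{k+1}$: the $\gamma_{k+1}>0$ case yields $|\gamma_{k+1}|\le\beta_{k+1}/(1+\beta_{k+1})$, and the $\gamma_{k+1}<0$ case yields $|\gamma_{k+1}|\le\beta_{k+1}/(1-\beta_{k+1})$, both dominated by $\beta_{k+1}/(1-\beta_{k+1})$ as in \eqref{eq:gammabound1}. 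The residual first-if branch ($\gamma_{k+1}=0$ or $\gamma_{k+1}\ge 1$) forces $\lambda_{k+1}\gamma_{k+1}=0$ and so respects the upper bound trivially. There is no substantive analytical obstacle; the only pitfall is disciplined bookkeeping of $\sign(\gamma_{k+1})$ in the elif subcase and confirming $\beta_{k+1}<1$ at the outset so that the absolute-value manipulations are justified.
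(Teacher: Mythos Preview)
Your proposal is correct and matches the paper's approach exactly: the paper does not spell out a proof but states that it ``consists of simply walking through the various cases in Algorithm \ref{alg:adapgsg},'' which is precisely the case analysis you carry out. Your bookkeeping is sound, including the preliminary observation $\beta_{k+1}<1$ and the sign-splitting in both the elif and else branches; your treatment of the first-if branch (where $\lambda_{k+1}=0$) as trivially satisfying the upper bound is the natural reading, since the equality \eqref{eq:gammabound2} is really intended for the elif branch where $\lambda_{k+1}\in(0,1)$.
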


We now state and prove Theorem \ref{thm:generalized_nonsingular_case}, which provides conditions under which adaptive $\gamma$-safeguarded AA preserves the order of convergence of the underlying fixed-point method. The assumptions are meant to describe the case where the underlying fixed-point method (note that a pNM can be viewed as a fixed-point method) is converging superlinearly within a neighborhood of a solution $x^*$. The conclusion then states that adaptive $\gamma$-safeguarding recognizes this, and tunes the Anderson step down to preserve the order of convergence. 

\begin{theorem}
\label{thm:generalized_nonsingular_case}
    Let $\{x_{k+1}\}$ be a sequence defined by $x_{k+1} = x_k + w_{k+1}$ for $k\geq 0$, where $w_{k+1} = g(x_k)-x_k$ and $g(x^*)=x^*$. Let $e_{k}^{fp} = x_k+w_{k+1}-x^*$. Suppose that there exists a real number $\rho\in(0,1)$ and constant $C>0$ such that from any $x_0\in B_{\rho}(x^*)$, $x_k$ converges to $x^*$ and, for all $k\geq 0$, we have $x_k+w_{k+1} \in B_{\rho}(x^*)$ and $\|e_{k+1}^{fp}\|\leq C\|x_k-x^*\|^{\mu}$ for some real number $\mu \in (1,2]$. Assume further that the algorithmic depth $m=1$ once $x_k\in B_{\rho}(x^*)$, and $\|w_{k+1}\|/\|w_k\|\leq r$, where $r$ is the user-chosen parameter in Algorithm \ref{alg:adapgsg}. Then if $\{x_k^{\gamma AA}\}$ denotes the sequence generated by Algorithm \ref{alg:anderson} with adaptive $\gamma$-safeguarding (Algorithm \ref{alg:adapgsg}), and $x_k,x_{k-1}\in B_{\rho}(x^*)$, it follows that $\|x_{k+1}^{\gamma AA} - x^*\| \leq \hat{C}\|x_k-x^*\|^{\mu}$.
\end{theorem}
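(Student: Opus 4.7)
The plan is to unroll one step of Algorithm \ref{alg:gsgpnm} and express $x_{k+1}^{\gamma AA}$ as the plain fixed-point update $g(x_k)$ perturbed by a multiple of $g(x_k)-g(x_{k-1})$, then control that perturbation using Lemma \ref{lem:gammabound} together with the superlinear hypothesis. Since $x_k+w_{k+1}=g(x_k)$ and $x_{k-1}+w_k=g(x_{k-1})$, the AA correction telescopes: $x_k-x_{k-1}+w_{k+1}-w_k=g(x_k)-g(x_{k-1})$, so
\begin{align*}
    x_{k+1}^{\gamma AA}-x^* = \bigl(g(x_k)-x^*\bigr)-\lambda_{k+1}\gamma_{k+1}\bigl(g(x_k)-g(x_{k-1})\bigr).
\end{align*}
Taking norms and applying the superlinear bound $\|g(x)-x^*\|\leq C\|x-x^*\|^{\mu}$ twice yields
\begin{align*}
    \|x_{k+1}^{\gamma AA}-x^*\|\leq C\|x_k-x^*\|^{\mu}+|\lambda_{k+1}\gamma_{k+1}|\cdot C\bigl(\|x_k-x^*\|^{\mu}+\|x_{k-1}-x^*\|^{\mu}\bigr),
\end{align*}
so the whole argument reduces to bounding $|\lambda_{k+1}\gamma_{k+1}|$ and combining it appropriately with each error term.

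Next I would invoke Lemma \ref{lem:gammabound}. The hypothesis $\eta_{k+1}=\|w_{k+1}\|/\|w_k\|\leq r$ forces $r_{k+1}=\eta_{k+1}$ and $\beta_{k+1}=\eta_{k+1}^{2}\leq r^{2}<1$, and both cases of the lemma give $|\lambda_{k+1}\gamma_{k+1}|\leq \eta_{k+1}^{2}/(1-r^{2})$. To translate $\eta_{k+1}$ into a ratio of errors, a reverse-triangle calculation shows that for $x_j\in B_{\rho}(x^*)$ with $\rho$ small enough that $C\rho^{\mu-1}\leq 1/2$, the quantity $\|w_{j+1}\|=\|g(x_j)-x_j\|$ lies between $\tfrac{1}{2}\|x_j-x^*\|$ and $\tfrac{3}{2}\|x_j-x^*\|$. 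This yields $\eta_{k+1}\leq 3\|x_k-x^*\|/\|x_{k-1}-x^*\|$, and together with $\eta_{k+1}\leq r$ it also produces the ratio bound $\|x_k-x^*\|/\|x_{k-1}-x^*\|\leq 3r$, which will be essential below.

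The hard part is bounding the cross term $\eta_{k+1}^{2}\|x_{k-1}-x^*\|^{\mu}$ when $\mu<2$. The direct estimate $\eta_{k+1}^{2}\|x_{k-1}-x^*\|^{\mu}\leq 9\|x_k-x^*\|^{2}\|x_{k-1}-x^*\|^{\mu-2}$ carries a negative exponent on $\|x_{k-1}-x^*\|$, so the factor $\|x_{k-1}-x^*\|^{2-\mu}$ must be traded for $\|x_k-x^*\|^{2-\mu}$. I would rewrite the right-hand side as $9\|x_k-x^*\|^{\mu}\bigl(\|x_k-x^*\|/\|x_{k-1}-x^*\|\bigr)^{2-\mu}$ and apply the ratio bound $\|x_k-x^*\|/\|x_{k-1}-x^*\|\leq 3r$ to control it by $9(3r)^{2-\mu}\|x_k-x^*\|^{\mu}$. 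The companion term $\eta_{k+1}^{2}\|x_k-x^*\|^{\mu}\leq r^{2}\|x_k-x^*\|^{\mu}$ is immediate from $\eta_{k+1}\leq r$. Collecting constants yields $\|x_{k+1}^{\gamma AA}-x^*\|\leq \hat{C}\|x_k-x^*\|^{\mu}$ with $\hat{C}=C\bigl(1+(r^{2}+9(3r)^{2-\mu})/(1-r^{2})\bigr)$. At $\mu=2$ this exponent trade is vacuous and the proof collapses to the quadratic-case argument of \cite[Corollary 3.4]{DaPoRe24}; what is new here is precisely this bookkeeping for $\mu\in(1,2)$.
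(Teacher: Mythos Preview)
Your proposal is correct and follows essentially the same route as the paper's proof: the same decomposition $x_{k+1}^{\gamma AA}-x^*=(g(x_k)-x^*)-\lambda_{k+1}\gamma_{k+1}(g(x_k)-g(x_{k-1}))$, the same invocation of Lemma~\ref{lem:gammabound} to bound $|\lambda_{k+1}\gamma_{k+1}|$ by $\beta_{k+1}/(1-\beta_{k+1})$, the same two-sided equivalence $\|w_{j+1}\|\asymp\|x_j-x^*\|$ via the reverse triangle inequality, and the same trick of rewriting the cross term as $\|x_k-x^*\|^{\mu}(\|x_k-x^*\|/\|x_{k-1}-x^*\|)^{2-\mu}$ to absorb the negative exponent. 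The only cosmetic differences are that the paper carries the constant $C_1=(1+C\rho^{\mu-1})/(1-C\rho^{\mu-1})$ rather than your explicit $1/2,\,3/2$ choice, and the paper argues the ratio $\|x_k-x^*\|/\|x_{k-1}-x^*\|<1$ (for $\rho$ small) whereas you obtain the slightly weaker but equally sufficient bound $\leq 3r$.
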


\begin{proof}
    In this proof, we'll let $e_{k+1}^{\gamma AA} = x_{k+1}^{\gamma AA}-x^*$. Let $x_k$ and $x_{k-1}\in B_{\rho}(x^*)$. Note that $\|e_{k+1}^{\gamma AA}\| \leq \|e_k^{fp}\|+\|e_{k+1}^{\gamma AA}-e_k^{fp}\|$. Since 
    \begin{align}
        \label{eq:fp-mu-bound}
        \|e_{k+1}^{fp}\|\leq C\|x_k-x^*\|^{\mu}, 
    \end{align}it
    suffices to bound $\|e_{k+1}^{\gamma AA}-e_k^{fp}\|$ in terms of $\|x_k-x^*\|^{\mu}$. Because we assume that $m=1$ within $B_{\rho}(x^*)$, it follows that $e_{k+1}^{\gamma AA} = e_{k+1}^{fp}-\lambda_{k+1}\gamma_{k+1}(e_{k+1}^{fp}-e_{k}^{fp})$. By Lemma \ref{lem:gammabound} and the assumption that $x_{k},x_{k-1}\in B_{\rho}(x^*)$, 
    \begin{align}
        \label{eq:gamma-fp-bound}
        \|e_{k+1}^{\gamma AA}-e_{k+1}^{fp}\|\leq \dfrac{\beta_{k+1}}{1-\beta_{k+1}}\left(1+\dfrac{\|x_k-x^*\|^{\mu}}{\|x_{k-1}-x^*\|^{\mu}}\right)\|x_{k-1}-x^*\|^{\mu}.
    \end{align}
    Using $w_{k+1}=e_{k+1}^{fp}-(x_k-x^*)$ and $\|e_{k+1}^{fp}\|\leq C\|e_k\|^{\mu} \leq C\rho^{\mu}$, we have 
    \begin{align}
        \left(1-C\rho^{\mu-1}\right)\|x_k-x^*\|\leq\|w_{k+1}\|\leq \left(1+C\rho^{\mu-1}\right)\|x_k-x^*\|.
    \end{align}
    Thus, for sufficiently small $\rho$, $\|w_{k+1}\|/\|w_{k}\| <1$ implies $\|x_k-x^*\|/\|x_{k-1}-x^*\| < 1$. Moreover,
    \begin{align}
        \dfrac{\|w_{k+1}\|}{\|w_k\|}\leq \left(\dfrac{1+C\rho^{\mu-1}}{1-C\rho^{\mu-1}}\right)\dfrac{\|x_k-x^*\|}{\|x_{k-1}-x^*\|}.
    \end{align}
    Noting that $\beta_{k+1} = \|w_{k+1}\|^2/\|w_k\|^2 \leq r^2$ by assumption, and writing $C_1 = (1+C\rho^{\mu-1})(1-C\rho^{\mu-1})^{-1}$, equation \eqref{eq:gamma-fp-bound} becomes
    \begin{align}
        \label{eq:aa-fp-gap}
        \|e_{k+1}^{\gamma AA}-e_{k+1}^{fp}\|\leq \dfrac{C_1^2}{1-r^2}\left(1+\dfrac{\|x_k-x^*\|^{\mu}}{\|x_{k-1}-x^*\|^{\mu}}\right)\|x_k-x^*\|^2\|x_{k-1}-x^*\|^{\mu-2}.
    \end{align}
    Combining equations \eqref{eq:fp-mu-bound} and \eqref{eq:aa-fp-gap} yields 
    \begin{align}
        \label{eq:final-bound}
        \|e_{k+1}^{\gamma AA}\|\leq \left(C+\dfrac{C_1^2}{1-r^2}\left(1+\dfrac{\|x_k-x^*\|^{\mu}}{\|x_{k-1}-x^*\|^{\mu}}\right)\left(\dfrac{\|x_k-x^*\|}{\|x_{k-1}-x^*\|}\right)^{2-\mu}\right)\|x_k-x^*\|^{\mu}.
    \end{align}
    Since $\|x_k-x^*\|/\|x_{k-1}-x^*\|<1$, we may set the term in parentheses on the right-hand-side of equation \eqref{eq:final-bound} to $\hat{C}$. This completes the proof. 
    \end{proof}

    A few remarks are in order. First, the assumption that the algorithmic depth $m$ in Algorithm \ref{alg:anderson} is set to one once $x_k$ is close to $x^*$ motivates the practical {\it asymptotic safeguarding} strategy demonstrated in \cite{DaPoRe24} and Section \ref{sec:numerics} of this work. 
    
    Second, the assumptions of Theorem \ref{thm:generalized_nonsingular_case} together simply say that the underlying fixed point iteration $x_{k+1} = g(x_k)$ is rapidly approaching the fixed point $x^*$. Thus, regardless of the particulars of $g$, Algorithm \ref{alg:adapgsg} will detect this rapid convergence and respond by scaling the Anderson iterates $x_{k+1}$ towards $g(x_k)$ in such a way that the order of convergence is preserved. Thus, Theorem \ref{thm:generalized_nonsingular_case} applies to both Newton's method if the Jacobian is nonsingular, and to the Levenberg-Marquardt method under the local error bound condition \cite{KaYaFu04}. In the class of problems for which Levenberg-Marquardt is appropriate, it's possible that there is not an isolated solution $x^*$, but rather a {\it set} of solutions $X^*$. One then wishes to measure the convergence of a sequence $\{x_k\}$ to the set $X^*$ by analyzing $\text{dist}(x_k,X^*):=\inf_{y\in X^*}\|x_k-y\|$. The conclusion of Theorem \ref{thm:generalized_nonsingular_case} in this case says that 
    \begin{align}
    \text{dist}(x_{k+1}^{\gamma AA},X^*)\leq \hat{C}\text{dist}(x_k,X^*)^{\mu}. 
    \end{align}
    
    The third remark concerns an apparent shortcoming of the conclusion of Theorem \ref{thm:generalized_nonsingular_case}. If the fixed point iteration converges faster than quadratic, then the final bound in \eqref{eq:final-bound} breaks down since $2-\mu < 0$ and $\|x_{k-1}-x^*\|/\|x_k-x^*\|$ will be large in the asymptotic regime. However, the ``2" in $2-\mu$ comes from the choice of $\beta_{k+1}$ in Algorithm \ref{alg:adapgsg}. We could just as well choose $\beta_{k+1} = r_{k+1}(\|w_{k+1}\|/\|w_k\|)^{p-1}$, for some $p\geq 2$, and then we would have $p - \mu$ in equation \eqref{eq:final-bound}. We summarize this in the following corollary. 

    \begin{corollary}
        The conclusion of Theorem \ref{thm:generalized_nonsingular_case} holds under the same assumptions if $\beta_{k+1}=r_{k+1}\|w_{k+1}\|/\|w_k\|$ in Algorithm \ref{alg:adapgsg} is replaced with 
        \begin{align} 
        \beta_{k+1}=r_{k+1}(\|w_{k+1}\|/\|w_k\|)^{p-1}
        \end{align}
         for any real number $p\geq 2$. 
    \end{corollary}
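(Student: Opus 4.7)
The plan is to retrace the proof of Theorem \ref{thm:generalized_nonsingular_case} verbatim, substituting the modified $\beta_{k+1}=r_{k+1}(\|w_{k+1}\|/\|w_k\|)^{p-1}$ wherever the original $\beta_{k+1}$ was used, and to verify that the one exponent in the final bound that depended on the specific form of $\beta_{k+1}$ still leads to the same conclusion. The critical observation is that Lemma \ref{lem:gammabound} is stated entirely in terms of the symbol $\beta_{k+1}$ and does not depend on its functional form in $\eta_{k+1}$. Hence the inequality
\begin{equation*}
\|e_{k+1}^{\gamma AA}-e_{k+1}^{fp}\|\leq \dfrac{\beta_{k+1}}{1-\beta_{k+1}}\left(1+\dfrac{\|x_k-x^*\|^{\mu}}{\|x_{k-1}-x^*\|^{\mu}}\right)\|x_{k-1}-x^*\|^{\mu}
\end{equation*}
is still valid without modification.

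Next I would reuse the comparison between $\eta_{k+1}$ and the error ratio. Because $\|e_{k+1}^{fp}\|\leq C\|x_k-x^*\|^{\mu}\leq C\rho^{\mu-1}\|x_k-x^*\|$, the same two-sided estimate
\begin{equation*}
(1-C\rho^{\mu-1})\|x_k-x^*\|\leq\|w_{k+1}\|\leq(1+C\rho^{\mu-1})\|x_k-x^*\|
\end{equation*}
holds, yielding $\eta_{k+1}\leq C_1\|x_k-x^*\|/\|x_{k-1}-x^*\|$ with $C_1=(1+C\rho^{\mu-1})/(1-C\rho^{\mu-1})$, exactly as before. Under the hypothesis $\eta_{k+1}\leq r$ one has $r_{k+1}=\eta_{k+1}$, so now $\beta_{k+1}=\eta_{k+1}^{p}\leq r^{p}<1$, which keeps the factor $1/(1-\beta_{k+1})$ bounded by $1/(1-r^{p})$.

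The only computational step that actually changes is the final substitution. Plugging $\beta_{k+1}\leq C_1^{p}(\|x_k-x^*\|/\|x_{k-1}-x^*\|)^{p}$ into the displayed inequality and collecting powers gives
\begin{equation*}
\|e_{k+1}^{\gamma AA}\|\leq \left(C+\dfrac{C_1^{p}}{1-r^{p}}\left(1+\dfrac{\|x_k-x^*\|^{\mu}}{\|x_{k-1}-x^*\|^{\mu}}\right)\left(\dfrac{\|x_k-x^*\|}{\|x_{k-1}-x^*\|}\right)^{p-\mu}\right)\|x_k-x^*\|^{\mu},
\end{equation*}
which is the exact analogue of equation \eqref{eq:final-bound} with $2$ replaced by $p$. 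Since the assumptions of Theorem \ref{thm:generalized_nonsingular_case} impose $\mu\in(1,2]\subseteq(1,p]$, the exponent $p-\mu$ is nonnegative, and because $\|x_k-x^*\|/\|x_{k-1}-x^*\|<1$ in the asymptotic regime the trailing factor is bounded by $1$. Absorbing the parenthetical expression into a constant $\hat{C}$ yields the desired bound.

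The main (and only) obstacle is really bookkeeping: one must confirm that $\eta_{k+1}\leq r$ still forces $r_{k+1}=\eta_{k+1}$ so that $\beta_{k+1}=\eta_{k+1}^{p}$ in a clean way, and that $1-\beta_{k+1}$ stays bounded away from zero uniformly in $k$. Both are immediate from the same assumption used in the original theorem, so no genuinely new analysis is required beyond the exponent tracking.
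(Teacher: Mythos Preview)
Your proposal is correct and is exactly the approach the paper intends: the paper does not give a separate proof but merely remarks that the exponent $2$ in $2-\mu$ of \eqref{eq:final-bound} came from the particular choice of $\beta_{k+1}$, and that replacing it by $r_{k+1}\eta_{k+1}^{p-1}$ yields $p-\mu$ in its place. You have simply filled in the bookkeeping for that remark, and all of it is sound.
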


In the next subsection we discuss how the theory developed in the previous two subsections apply to the special cases of the Levenberg-Marquardt method. 

\subsection{Special Case: Levenberg-Marquardt}

As discussed in the introduction, the Levenberg-Marquardt (LM) method is a technique for solving nonlinear least-squares problems. It can also be used as a root-finding algorithm when the objective function's minimum value is zero. Given $f:\mathbb{R}^n\to\mathbb{R}^n$ and an initial guess $x_0$, the goal at step $k\geq 0$ is to minimize $\|f(x_k)+f'(x_k)d\| + \mu_k\|d\|$ over all $d\in\mathbb{R}^n$. This leads to the following algorithm. \\
\begin{minipage}{1.0\linewidth}
\begin{algorithm}[H]
\begin{algorithmic}[1]
\caption{Levenberg-Marquardt (LM)}
\label{alg:lm}
\State{Choose $x_0\in\mathbb{R}^n$ and $\mu_0\in\mathbb{R}$.}
\For{$k=0,1,...$}
\State $w_{k+1}\gets -(f'(x_k)^Tf'(x_k)+\mu_k I)^{-1}f'(x_k)^Tf(x_k)$
\State $x_{k+1}\gets x_k+w_{k+1}$
\EndFor
\end{algorithmic}
\end{algorithm}
\end{minipage}
\vspace{0.5em}

The parameter $\mu_k$ is the LM parameter. If $X^*$ is the set of minimizers of $\|f(x)\|^2$, and $\mu_k = \mu_0\|f(x_k)\|^2$, then the sequence generated by Algorithm \ref{alg:lm} converges quadratically to the solution set $X^*$ \cite[Theorem 2.6]{KaYaFu04} under the local error bound condition. If $f'(x^*)$ is 
nonsingular, then the local 
error condition holds, and it follows that LM converges locally quadratically for nonsingular problems. 
To see this, suppose that $f$ is $C^2$, and let $x^*\in X^*$ such that $f'(x^*)$ is nonsingular. Let $m=\|f'(x^*)^{-1}\|^{-1}/2$. Then there exists a neighborhood $B_R(x^*)$ such that if $x\in B_R(x^*)$, then $f'(x)$ is invertible with $\|f'(x)^{-1}\|\leq 1/m$ and $\|f'(x)y\|\geq m\|y\|\,$ for any $y$ \cite[Chapter 4]{Ke95}. Let $M=\max\|f''(x)\|$ over $B_R(x^*)$, Taylor expand $f(x^*)=0$ about $x\in B_R(x^*)$, and apply the reverse triangle inequality to get 
\begin{align}
    \|f(x)\|\geq \left(m-\dfrac{M}{2}\|x-x^*\|\right)\|x-x^*\| \geq  \left(m-\dfrac{MR}{2}\right)\|x-x^*\|.
\end{align}
Thus $\text{dist}(x,X^*)\leq c\|f(x)\|$ where $c=(m-MR/2)^{-1}.$
In \cite{IzKuSo18-1}, it was shown that if the error bound condition is replaced with 2-regularity, then LM converges locally linearly in a starlike domain about $x^*$ just like standard Newton. It is also shown in \cite{IzKuSo18-1} that, if $f$ is 2-regular along some $v\in N$, then the local error bound condition does not hold. In the case where the local error bound condition holds or the problem is nonsingular, Theorem \ref{thm:generalized_nonsingular_case} says that Anderson acceleration with $\gamma$-safeguarding will not reduce the order of convergence. 

To discuss the 2-regular case, first note that if $f'(x_k)^T$ is invertible, then LM can be viewed as a pNM where $\Omega_k = \mu_kf'(x_k)^{-T}$ and $\omega_k=0$ \cite{IzKuSo18-1}. Taking $\omega_k\neq 0$ yields an inexact LM method, but here we will assume $\omega_k=0$. Lemma \ref{lem:izmailov1} and Theorem \ref{thm:main} together require that $\Omega(x_k) = \mathcal{O}(\|x_k-x^*\|)$, $\|P_N\Omega(x_k)\|\leq \Delta\|x_k-x^*\|$, where $\Delta>0$ is given, and $P_N\Omega(x_k) = \mathcal{O}(\|P_R(x_k-x^*)\|) +\mathcal{O}(\|x_k-x^*\|^2).$ Under Assumption \ref{assumptions}, we have $P_Nf'(x_k)^{-T} = \mathcal{O}(\|x_k-x^*\|^{-1})$ within the starlike domain of convergence \cite{DeKeKe83,Gr80}. Thus, by choosing $\mu_k = \mu_0\|f(x_k)\|^2$ as in \cite{KaYaFu04}, we ensure that each condition holds for convergence in the 2-regular case. The authors of \cite{IzKuSo18-1} note that one may take $\mu_k = \mu_0\|f(x_k)\|^{p}$ for any $p\geq 2$. Unless stated otherwise, we choose $\mu_k$ according to \cite{KaYaFu04} in Section \ref{sec:numerics}. 

These theoretical results suggest that Anderson acceleration can be a useful tool to pair with the LM method both when the local error bound condition holds and when it does not. This does not seem to be common practice given the lack of articles in the literature applying Anderson to LM. 

\section{Numerical Experiments}
\label{sec:numerics}

In this section, we demonstrate safeguarded Anderson acceleration applied to two particular pNMs: Newton  and LM (Algorithm \ref{alg:lm}). We also demonstrate the inexact Newton and inexact LM methods in subsection \ref{subsec:chand}. All experiments were performed on an M1 MacBook Pro with 8 GB of RAM. The code used in Subsections \ref{subsec:chand} and \ref{subsec:aalm} is available on the \href{https://github.com/mdallas1/AApNM}{author's Github page}. We will let $\gamma$AANewt$(m,r)$ and $\gamma$AALM$(m,r)$ denote, respectively, Anderson accelerated Newton and LM with adaptive $\gamma$-safeguarding, algorithmic depth $m$, and safeguarding parameter $r$. The inexact versions will follow the same format, but with inNewt or inLM. 
There are two ways one can implement $\gamma$-safeguarding. When the algorithmic depth $m=1$ in Algorithm \ref{alg:anderson}, $\gamma$-safeguarding may be activated for the entire solve, or it can be activated once the error estimator falls below a user-chosen threshold $\tau$. The former strategy is called {\it preasymptotic safeguarding} in \cite{DaPoRe24}, and the latter is {\it asymptotic safeguarding}. Asymptotic safeguarding is supported by the theory, but preasymptotic safeguarding has been observed to outperform standard AANewt for certain problems with the right parameter choice \cite{DaPoRe24}. If the algorithmic depth $m >1$, then one must use asymptotic safeguarding. Aside from some experiments with the Chandrasekhar H-equations, we will use asymptotic safeguarding in the demonstrations with $\tau = 10^{-1}$. It is the strategy best supported by theory, and numerical experiments suggest that, generally, we want the full, unsafeguarded Anderson acceleration in the preasymptotic regime. 

\subsection{The Chandrasekhar H-Equation}
\label{subsec:chand}

The Chandrasekhar H-equation \cite{chandra60,Ke95} is an integral equation from radiative transfer theory with a parameter typically denoted by $\omega$, but we will write as $\bar{\omega}$ to distinguish it from the perturbation term $\omega_k$ in equation \eqref{eq:pnm_update}. To solve the Chandrasekhar $H$-equation is to find a continuous function $H(x):[0,1]\to\mathbb{R}$ such that 
\begin{align} 
    \label{eq:ch}
    H(x) - \left(1-\dfrac{\bar{\omega}}{2}\int_0^1 \dfrac{xH(s)}{x+s}\,ds\right)^{-1} = 0.
\end{align}
Equation \ref{eq:ch} has become a standard benchmark problem in the singular Newton literature and related areas \cite{DeKeKe83,KaYaFu04,Ke18,posc20,ToKe15}. It can be viewed as an equation on $ C[0,1]$ with solution $H\in C[0,1]$, and it can be shown that when $\bar{\omega}=1$, equation \ref{eq:ch} is singular with a one-dimensional null space \cite{Ke18}. When $\bar{\omega} \in (0,1)$, the problem is nonsingular. It is therefore a nontrivial problem that is well-understood theoretically, and thus serves as an excellent benchmark on which to test new methods for singular problems. To solve equation \eqref{eq:ch}, we follow \cite{Ke18} and discretize the integral with the composite midpoint rule using $N=10^3$ nodes; $H(x)$ is approximated by a vector of the same length. We thus obtain the $10^3\times 10^3$ system of nonlinear equations 
\begin{align}
\label{eq:discretized-ch}
    f(x) = x_j - \left(1-\dfrac{\bar{\omega}}{2N}\sum_{i=1}^{N} \dfrac{t_jx_i}{t_j+t_i}\right)^{-1}=0
\end{align}
where $t_j = (2j-1)/2N$ for $j=1,...,N$ and $x_j = H(t_j)$. For $x_0$, we selected 50 random vectors and reported the average number of iterations to convergence along with the average terminal residual. Iterations were terminated once $\|f(x)\|<10^{-8}$ or 100 iterations were reached, in which case failure was declared. For the inexact solves, we chose the forcing term according to the second choice in \cite{EisWa96}. With this choice, it is not clear that the conditions in Lemma \ref{lem:izmailov1} and Assumption \ref{assumptions} on the perturbation term $\omega_k$ are satisfied. However, the numerical results presented in Tables \ref{tab:aanewt_omega_1}-\ref{tab:aanewt_omega_08} show no significant difference between the exact solves and the inexact solves.  

Results for $\bar{\omega}=1$ are reported in Tables \eqref{tab:aanewt_omega_1} and \eqref{tab:aalm_omega_2}. The results are consistent with the theory. AA Newton outperforms both exact and inexact Newton. With Algorithm \ref{alg:adapgsg}, we still converge faster than Newton, but slightly slower than Newton without Algorithm \ref{alg:adapgsg}. This is the cost of scaling Anderson iterates towards Newton iterates in order to guarantee local convergence for singular problems. Note that with asymptotic safeguarding, the inexact solves are much less sensitive to the algorithmic depth. With AAinNewt(50), the solve failed for three initial guesses, but $\gamma$AAinNewt(50) converged. The results for LM also follow the theory. Note that AAinLM(50) failed to converge from each random initial iterate. With safeguarding, there were no failures. 

If we set $\bar{\omega}=0.8$, then equation \eqref{eq:ch} and its discretization are nonsingular. In this regime, Newton's method and standard Levenberg-Marquardt converge rapidly, and Anderson acceleration slows this convergence, especially when the algorithmic depth $m$ is large. Results are reported in Table \eqref{tab:aanewt_omega_08}. As expected from the theory, $\gamma$AALM and $\gamma$AAinLM show virtually no sensitivity to the choice of $m$, and they both outperform their unsafeguarded counterparts. We also tested the inexact solves with $\bar{\omega}=0.8$, but the results were similar to the $\bar{\omega}=1$ case, so we omit them.

\begin{table}[H]
\small
    \centering
    \begin{tabular}{|c|c|c||c|c|c|}
        \hline 
         Algorithm & Iterations & $\|f(x_k)\|$ & Algorithm & Iterations & $\|f(x_k)\|$\\
        \hline 
         Newt &16 &4.45e-09 &inNewt &16 &9.21e-09 \\
         \hline
         \multirow{2}{0.5em}{}AANewt(1) &6 &2.24e-11 &AAinNewt(1) &8 &1.14e-09 \\ $\gamma$AANewt(1,0.9) &12 &2.01e-09 &$\gamma$AAinNewt(1,0.9) &13 &3.12e-09 \\
         \hline
         \multirow{2}{0.5em}{}AANewt(5) &7 &8.04-09 &AAinNewt(5) &14 &2.47e-09 \\ $\gamma$AANewt(5,0.9) &12 &2.01-e09 &$\gamma$AAinNewt(5,0.9) &13 &3.17e-09 \\
         \hline 
         \multirow{2}{0.5em}{}AANewt(10) &7 &8.01e-09 &AAinNewt(10) &20 &3.40e-09 \\$\gamma$AANewt(10,0.9) &12 &2.01e-09 &$\gamma$AAinNewt(10,0.9) &12 &3.17e-09 \\ 
         \hline
         \multirow{2}{0.5em}{}AANewt(50) &7 &8.01e-09 &AAinNewt(50) &61 &4.35e-09 \\ $\gamma$AANewt(50,0.9) &12 &2.01e-09 &$\gamma$AAinNewt(50,0.9) &13 &3.17e-09\\
         \hline
    \end{tabular}
    \caption{Results for Anderson accelerated Newton and inexact Newton applied to the Chandrasekhar H-equation for $\bar{\omega}=1$.}
    \label{tab:aanewt_omega_1}
\end{table}

\begin{table}[H]
\small
    \centering
    \begin{tabular}{|c|c|c||c|c|c|}
        \hline 
         Algorithm & Iterations & $\|f(x_k)\|$ &Algorithm & Iterations & $\|f(x_k)\|$\\
        \hline 
         LM &16 &5.58e-09 &inLM &17 &4.28e-09 \\
         \hline
         \multirow{2}{0.5em}{}AALM(1) &6 &1.72e-09 &AAinLM(1) &9 &4.93e-09 \\ $\gamma$AALM(1,0.9) &12 &2.20e-09 &$\gamma$AAinLM(1,0.9) &12 &8.46e-09\\
         \hline
         \multirow{2}{0.5em}{}AALM(5) &10 &4.81e-09 &AAinLM(5) &24 &2.63e-09\\ $\gamma$AALM(5,0.9) &12 &2.20e-09 &$\gamma$AAinLM(5,0.9) &12 &8.5e-09\\
         \hline 
         \multirow{2}{0.5em}{}AALM(10) &13 &3.70e-09 &AinLM(10) &51 &3.5e-09\\ $\gamma$AALM(10,0.9) &12 &2.20e-09 &$\gamma$AAinLM(10,0.9) &12 &8.47e-09 \\
         \hline
         \multirow{2}{0.5em}{}AALM(50) &45 &2.81e-09 &AAinLM(50) &F &- \\ $\gamma$AALM(50,0.9) &12 &2.89e-09 &$\gamma$AAinLM(50,0.9) & 12 &8.47e-09\\
         \hline
    \end{tabular}
    \caption{Results for Anderson accelerated Levenberg-Marquardt and inexact Levenberg-Marquardt applied to the Chandrasekhar H-equation for $\bar{\omega}=1$.} 
    \label{tab:aalm_omega_2}
\end{table}

\begin{table}[H]
\small
    \centering
    \begin{tabular}{|c|c|c||c|c|c|}
        \hline 
         Algorithm & Iterations & $\|f(x_k)\|$ & Algorithm & Iterations & $\|f(x_k)\|$\\
        \hline 
         Newt &4 &3.8e-14 &LM &4 &4.18e-14 \\
         \hline
         \multirow{2}{0.5em}{}AANewt(1) &5 &1.34e-11 &AALM(1) &5 &2.12e-11 \\ $\gamma$AANewt(1,0.9) &4 &2.63e-14 &$\gamma$AALM(1,0.9) &4 &2.60e-14 \\
         \hline
         \multirow{2}{0.5em}{}AANewt(5) &8 &2.0e-11 &AALM(5) &8 &8.5e-14\\ $\gamma$AANewt(5,0.9) &4 &2.62e-14 &$\gamma$AALM(5,0.9) &4 &2.60e-14 \\
         \hline 
         \multirow{2}{0.5em}{}AANewt(10) &13 &8.83e-13 &AALM(10) &13 &4.55e-14\\$\gamma$AANewt(10,0.9) &4 &2.62e-14 &$\gamma$AALM(10,0.9) &4 &2.60e-14 \\ 
         \hline
         \multirow{2}{0.5em}{}AANewt(50) &26 &5.92e-09 &AALM(50) &26 &2.03e-09 \\ $\gamma$AANewt(50,0.9) &4 &2.62e-14 &$\gamma$AALM(50,0.9) &4 &2.60e-14\\
         \hline
    \end{tabular}
    \caption{Results for Anderson accelerated Newton and Anderson accelerated Levenberg-Marquardt applied to the Chandrasekhar H-equation for $\bar{\omega}=0.8$.}
    \label{tab:aanewt_omega_08}
\end{table}

\subsection{Incompressible Channel Flow}
\label{subsec:coanda}
In this section we apply Algorithm \ref{alg:gsgpnm}, with $\Omega_k=\omega_k=0$, to a parameter-dependent incompressible flow problem near a bifurcation point \cite{pichithesis}. This is of interest since a bifurcation point is necessarily a singular point, and, even if the problem is nonsingular, a nearby singular problem can adversely affect the solve. Anderson acceleration can mitigate this affect. However, if the underlying method is a method that, like Newton, will converge superlinearly in a neighborhood of our solution provided the problem is nonsingular, we would like to turn off Anderson acceleration in the preasymptotic regime to not hinder convergence \cite{ReXi23}. Since adaptive $\gamma$-safeguarding, Algorithm \ref{alg:adapgsg}, is designed to do precisely this, solving parameter dependent problems near bifurcation points is an excellent test-case for the algorithms of interest in this paper. 
The domain is shown in Figure \ref{fig:coanda_boundaries}. The governing equations \eqref{coanda-model} are the Navier-Stokes equations coupled with no-slip and stress-free boundary conditions on the indicated boundaries. On $\Gamma_{\text{in}}$, we define $\mathbf{u}_{\text{in}} = 20(5-x_2)(x_2-2.5)$. We discretize using the $P_2-P_1$ Taylor-Hood finite element method with 14,406 degrees of freedom.  

\begin{equation}\label{coanda-model}
\left\{
\begin{alignedat}{2}
  -\mu \Delta \mathbf{u} + \mathbf{u} \cdot \nabla \mathbf{u} + \nabla p &= \mathbf{0}, \quad &&\text{in } \Omega,\\
  \nabla \cdot \mathbf{u} &= 0, \quad &&\text{in }\Omega,\\
  \mathbf{u} &= \mathbf{u}_{\text{in}}, \quad &&\text{on } \Gamma_{\text{in}},\\
  \mathbf{u} &= 0, \quad &&\text{on } \Gamma_{\text{wall}},\\
  -p\mathbf{n} + \mu (\nabla \mathbf{u}) \mathbf{n} &= 0, \quad &&\text{on } \Gamma_{\text{out}}.
\end{alignedat}
\right.
\end{equation}

\begin{figure}[H]
    \centering
    \includegraphics[width=0.7\linewidth]{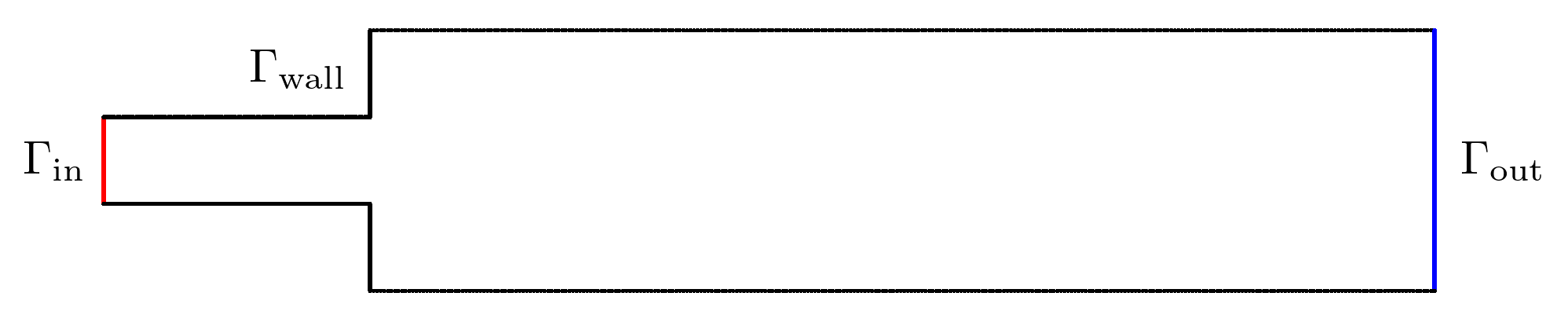}
    \caption{Channel flow domain and boundaries. The red vertical segment is $\Gamma_{\text{in}}$, the blue is $\Gamma_{\text{out}}$, and the remaining boundaries, in black, are $\Gamma_{\text{wall}}$.}
    \label{fig:coanda_boundaries}
\end{figure}

The parameter of interest is the kinematic viscosity $\mu = \text{Re}^{-1}$.  When $\mu > 1$, the stable solution is symmetric about the line $y=3.75$. Between $\mu=1$ and $\mu=0.9$, the stable solution transitions to one of two asymmetric solutions seen in Figure \ref{fig:coanda_solns}. From the standard initial guess $\mathbf{u}_0=\mathbf{0}$, Newton's method converges slowly, if at all, to the solution when $\mu$ is near the bifurcation point. Continuation can solve the convergence issues, but coupling Newton with Anderson acceleration and adaptive $\gamma$-safeguarding can improve convergence near the bifurcation without continuation. 

\begin{figure}[H]
    \centering
    \includegraphics[width=0.7\linewidth]{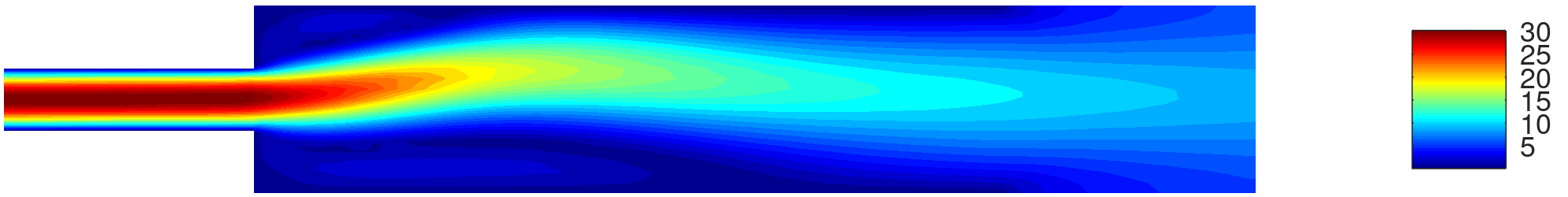}
    \includegraphics[width=0.7\linewidth]{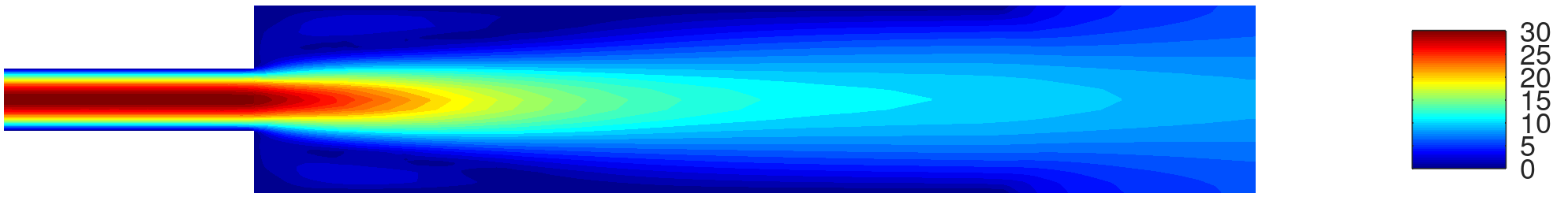}
    \includegraphics[width=0.7\linewidth]{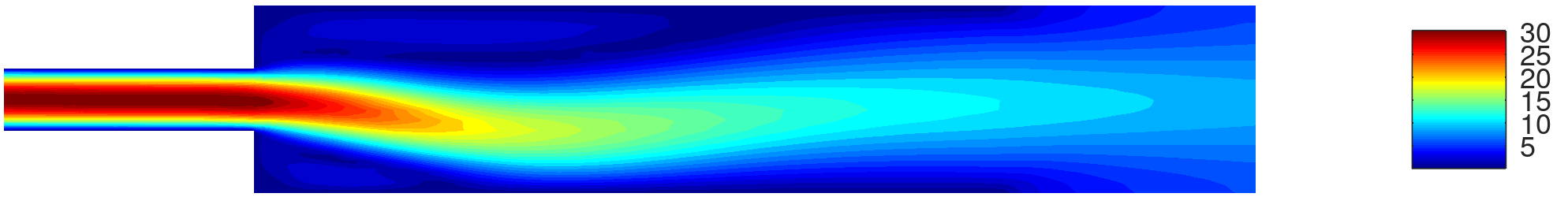}
    \caption{Stable velocity solutions for different values of $\mu$. The top and bottom figures are stable solutions when $\mu=0.9$. When $\mu=1$, the middle solution is stable.}
    \label{fig:coanda_solns}
\end{figure}

Representative numerical results are presented in Figure \ref{fig:mu98} and Figure \ref{fig:mu94}. When $\mu=0.98$, seen in Figure \ref{fig:mu98}, we are well away from the bifurcation point, and Newton's method therefore converges rapidly. Anderson acceleration of depth $m$, denoted by AANewt$(m)$ in the plots, slows convergence as $m$ is increased. With asymptotic safeguarding, this affect is mitigated even for $m=5$ and $m=10$. This is consistent with the theory. Asymptotically, we set $m=1$ and activate Algorithm \ref{alg:adapgsg} which detects that Newton's method is converges rapidly and scales the iterates towards standard Newton. We set $r=0$ in Algorithm \ref{alg:adapgsg} to ensure strong scaling towards the Newton step, but it is worth noting that, with asymptotic safeguarding, performance is insensitive to the particular choice of $r$. When $\mu=0.94$, seen in Figure \ref{fig:mu94}, we are close to the bifurcation point, and Newton takes many iterations to converge. Applying Anderson acceleration leads to much faster convergence, and asymptotic safeguarding reduces the affect of increasing the algorithmic depth. The takeaway is that when solving a problem that may be nearly singular, but this is not known a priori, using asymptotic safeguarding along with Anderson acceleration can help the solver reach the asymptotic regime, and then safeguarding ensures that the local order of convergence will not be decreased. 

\begin{figure}[H]
    \centering
    \includegraphics[width=0.49\linewidth]{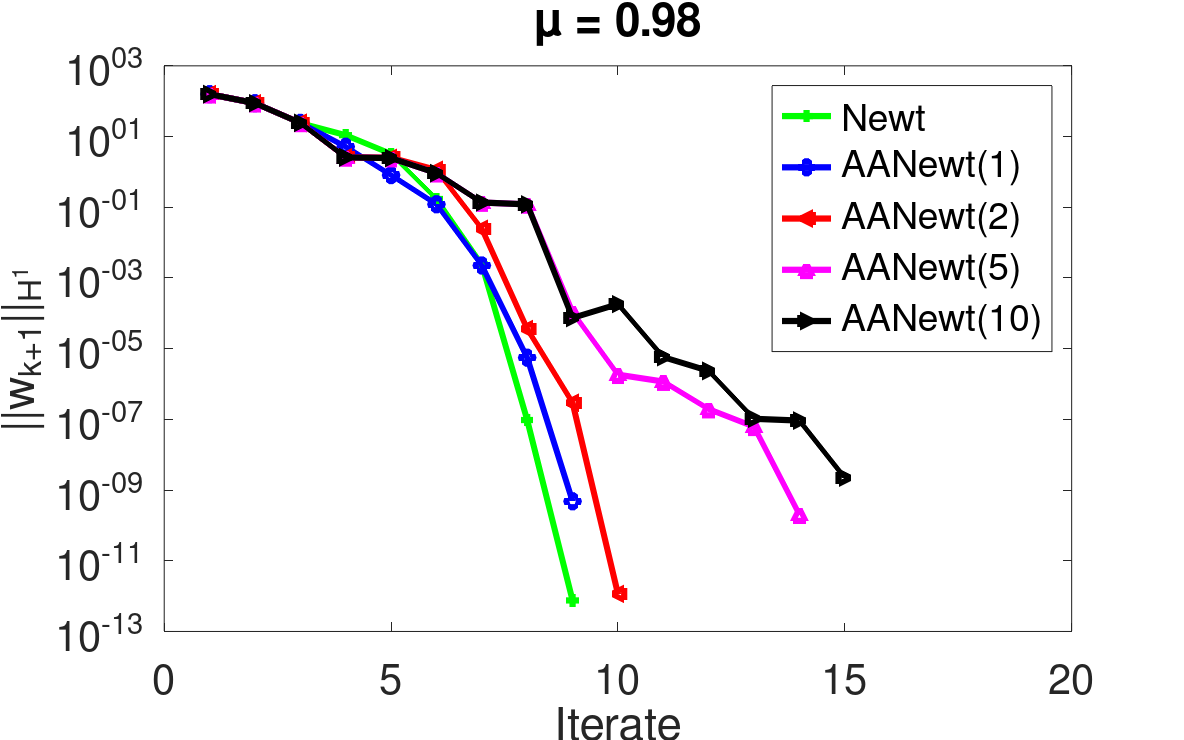}
    \includegraphics[width=0.49\linewidth]{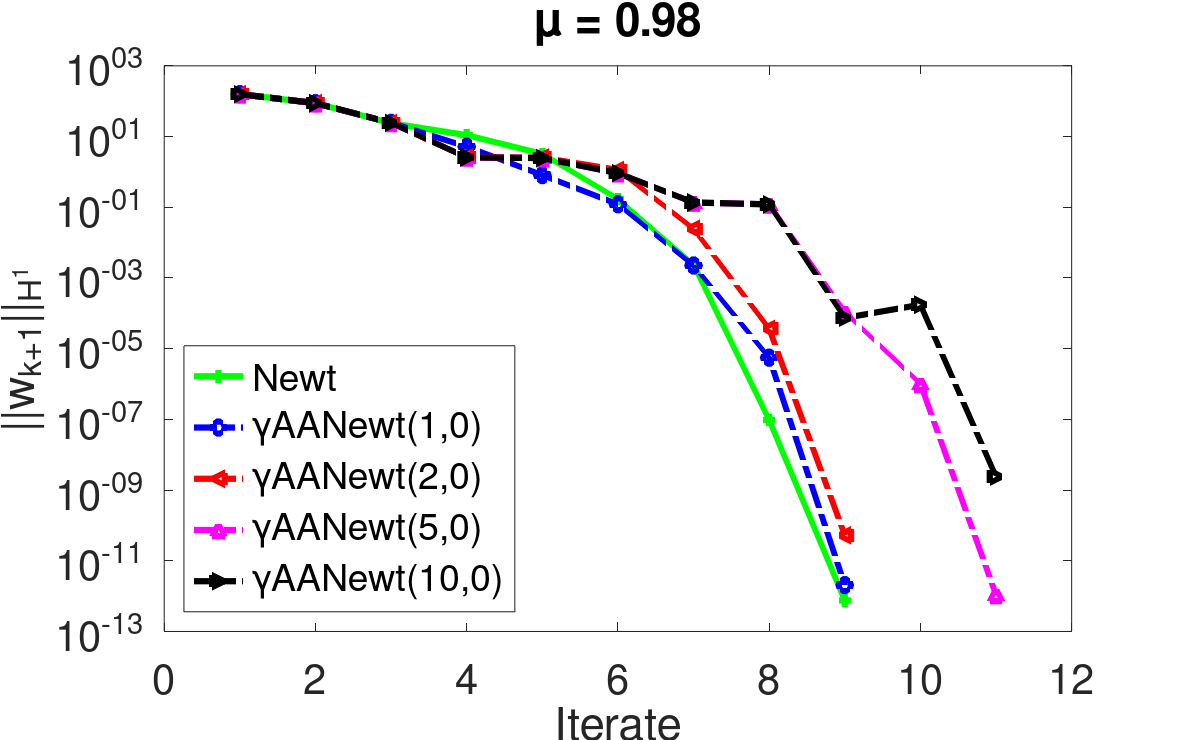}
    \caption{Residual history for channel flow problem with $\mu = 0.98$. Left: results for Newton and Newton--Anderson with algorithmic depths $m=1$, 2, 5, and 10. Right: results for $\gamma$-safeguarded Newton--Anderson applied asymptotically plotted with dash-dotted lines. We set $r=0$ in Algorithm \ref{alg:adapgsg}.} 
    \label{fig:mu98}
\end{figure}

\begin{figure}[H]
    \centering
    \includegraphics[width=0.49\linewidth]{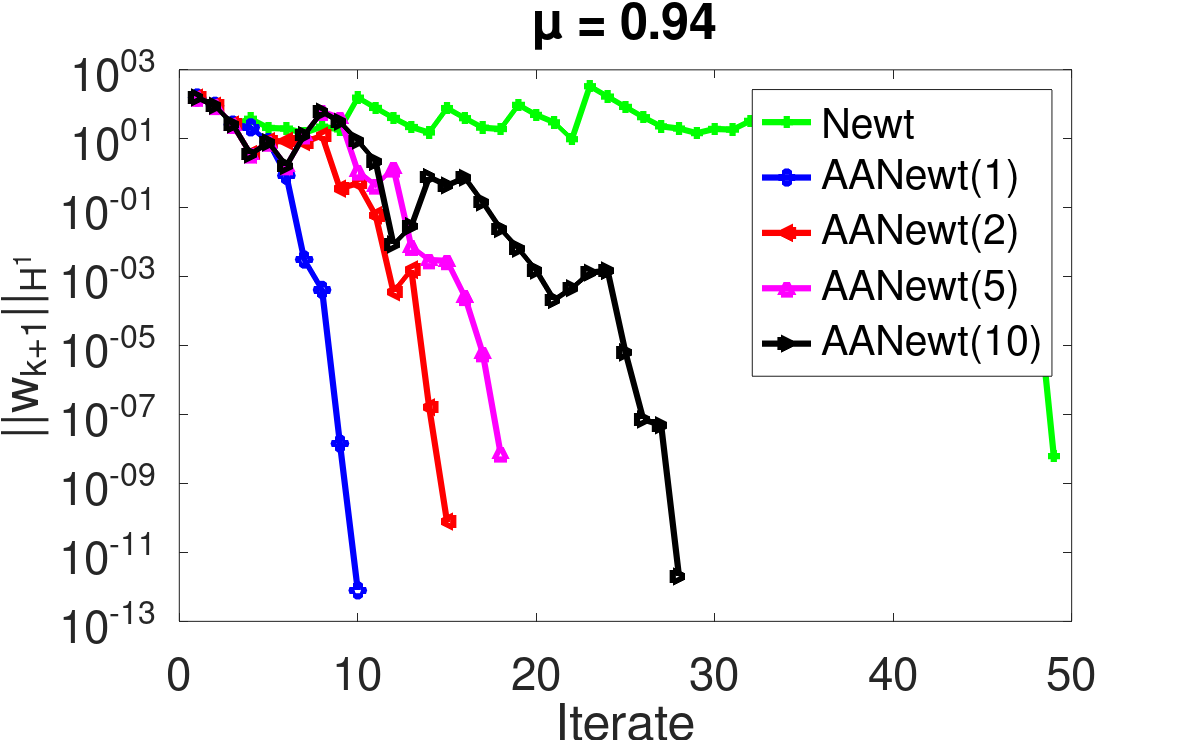}
    \includegraphics[width=0.49\linewidth]{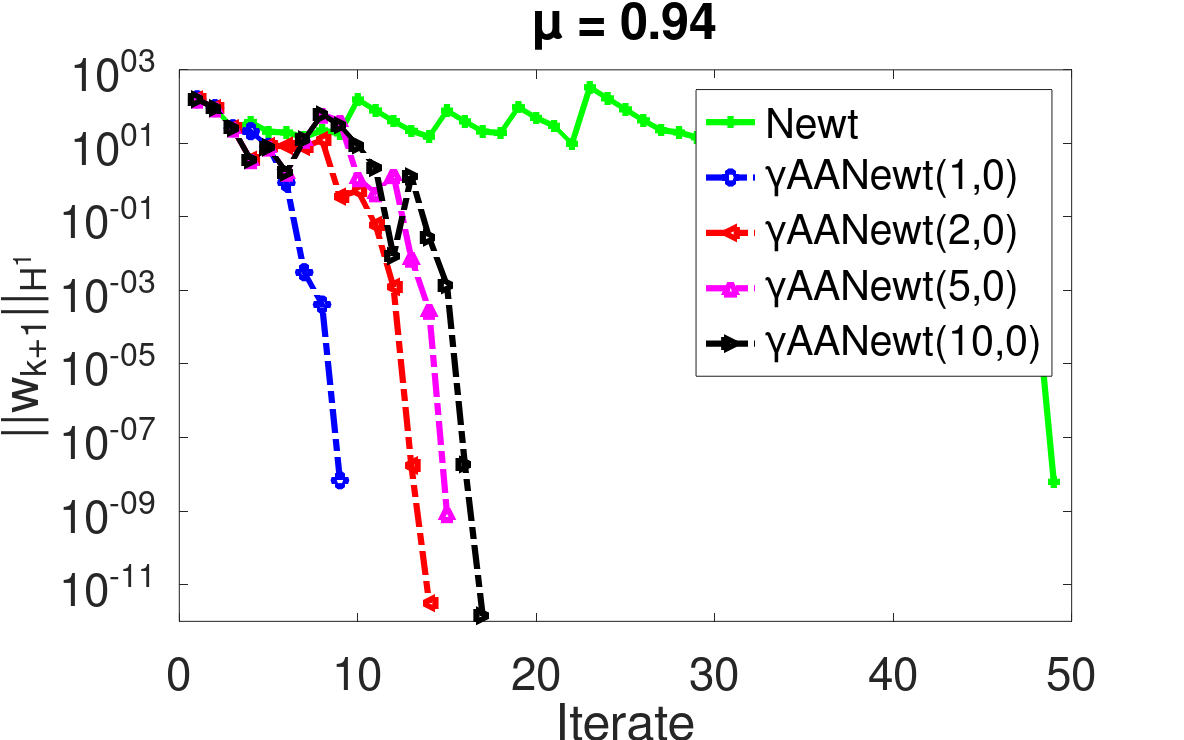}
    \caption{Residual history for channel flow problem with $\mu = 0.94$. Left: results for Newton and Newton--Anderson with algorithmic depths $m=1$, 2, 5, and 10. Right: results for $\gamma$-safeguarded Newton--Anderson applied asymptotically plotted with dash-dotted lines. We set $r=0$ in Algorithm \ref{alg:adapgsg}.}
    \label{fig:mu94}
\end{figure}

\subsection{AALM Under Local Error Bound}
\label{subsec:aalm}

The Chandrasekhar H-equation was used to demonstrate that AA and adaptive $\gamma$-safeguarded AA can accelerate convergence of LM when solving for roots of a function. These numerical results are supported by the theory since the Chandrasekhar function is 2-regular. In this section, we demonstrate that AA is also effective when the local error bound holds rather than 2-regularity. The test problems, listed below, are also interesting because the objective functions' minimum values are greater than zero. The numerical results presented here thus demonstrate that AALM and adaptive $\gamma$-safeguarded AALM can outperform LM even when LM is not used as a root-finding algorithm. The test problems correspond to examples 5.1-5.4 in \cite{BeGoSa19}. We state them below along with the initial iterate and the choice of LM parameter $\mu_k$.  
\begin{enumerate}
    \item (Beh1) $f(x) = (x_1^2+x_2^2-1,x_1^2+x_2^2-9)$, $x_0 = (0,\sqrt{5}+0.03)^T$, and $\mu_k = \|f'(x_k)^Tf(x_k)\|.$
    \item (Beh2) $f(x) = (x_1^3-x_1x_2+1,x_1^3+x_1x_2+1)^T$, $x_0 = (0.008,2.0)^T$, $\mu_k = \|f'(x_k)^Tf(x_k)\|$.
    \item (Beh3) $f(x)=((1/9)\cos(x_1)-x_2\sin(x_1),(1/9)\sin(x_1)+x_2\cos(x_1))^T$, $x_0 = (\pi,0.001)^T$, $\mu_k = 0.2$.
    \item (Beh4) $f(x) = (x_2-x_1^2-1,x_2+x_1^2+1)^T$, $x_0 = (0.01,0)^T$, $\mu_k = 5$. 
\end{enumerate}
Each solve was terminated when $\|f'(x_k)^Tf(x_k)\| < 10^{-8}$ or 100 iterations were reached, in which case failure was declared. The local error bound condition is satisfied for each test problem \cite{BeGoSa19}. We remark that solutions for Beh1, Beh2, and Beh3 are nonisolated, and it was observed during numerical experiments that AALM converged to a different solution than LM. The result for Beh1 and Beh2 are presented in Figure \ref{fig:beh1-and-2}. In these problems, LM converges rapidly, and AALM with Algorithm \ref{alg:adapgsg} recognize this and respond accordingly. Note that the safeguarded versions of AALM outperform the unsafeguarded counterparts. In Figure \ref{fig:beh3}, we see that for Beh3 LM converges linearly; and, although the safeguarded versions of AALM converge, they barely converge faster than LM. AALM without safeguarding is more affective here. Similar results from Beh4 are shown in Figure \ref{fig:beh4}. In fact, when $\mu_k = \|f'(x_k)^Tf(x_k)\|$, LM fails, but we can recover convergence using AALM {\it without safeguarding}. Thus, in practice, one may need full unsafeguarded Anderson acceleration to recover or accelerate convergence, but if the problem is known to be locally nonlinear, but near a singular problem, one should use Anderson acceleration with Algorithm \ref{alg:adapgsg} to ensure the most rapid local convergence. 

\begin{figure}[H]
    \centering
\includegraphics[width=0.49\linewidth]{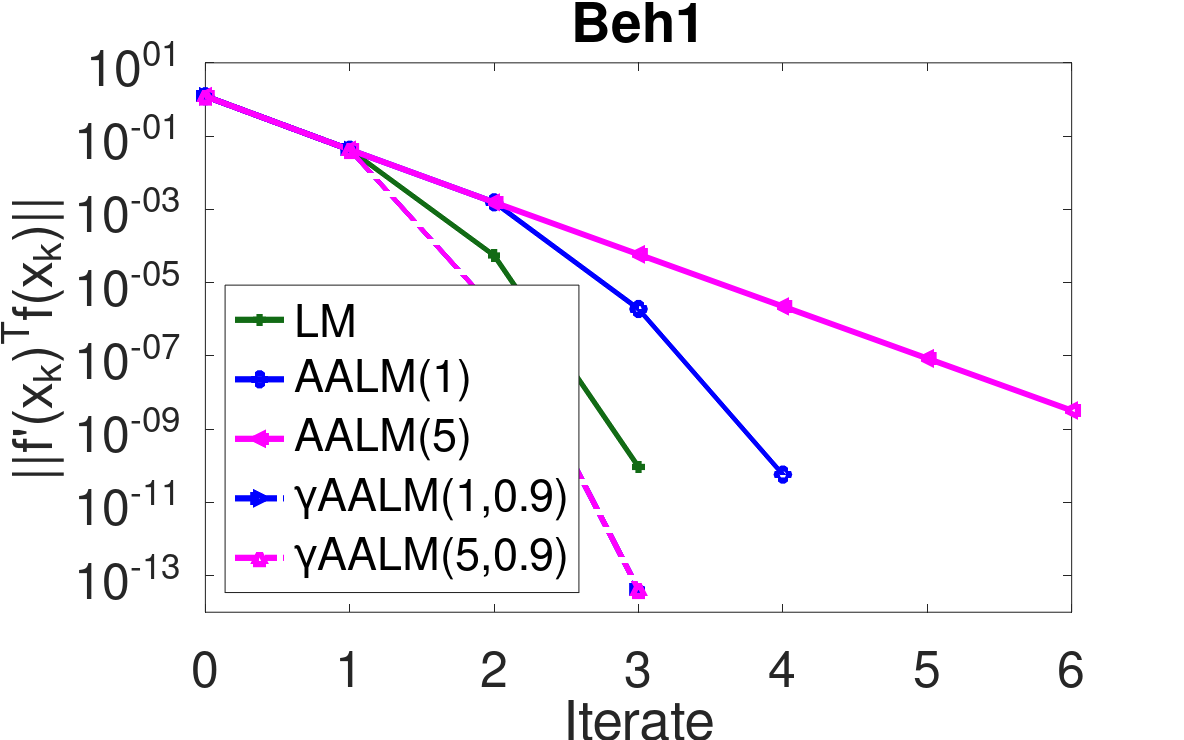}
\includegraphics[width=0.49\linewidth]{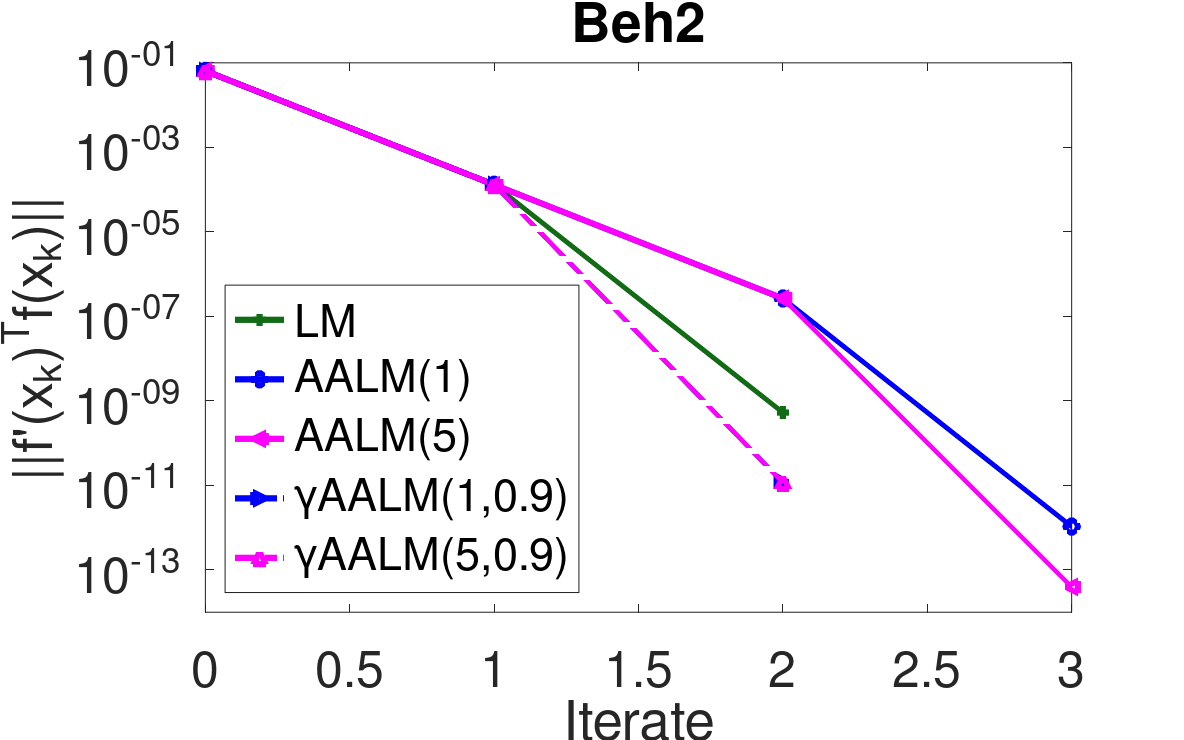}
    \caption{Left: Residual history for test function Beh1. Right: Residual history for test function Beh2. Dash-dotted lines correspond to AALM with $\gamma$-safeguarding. Note that $\gamma$AALM(1,0.9) and $\gamma$AALM(5,0.9) coincide in each of these experiments.} 
    \label{fig:beh1-and-2}
\end{figure}
\begin{figure}[H]
    \centering
\includegraphics[width=0.5\linewidth]{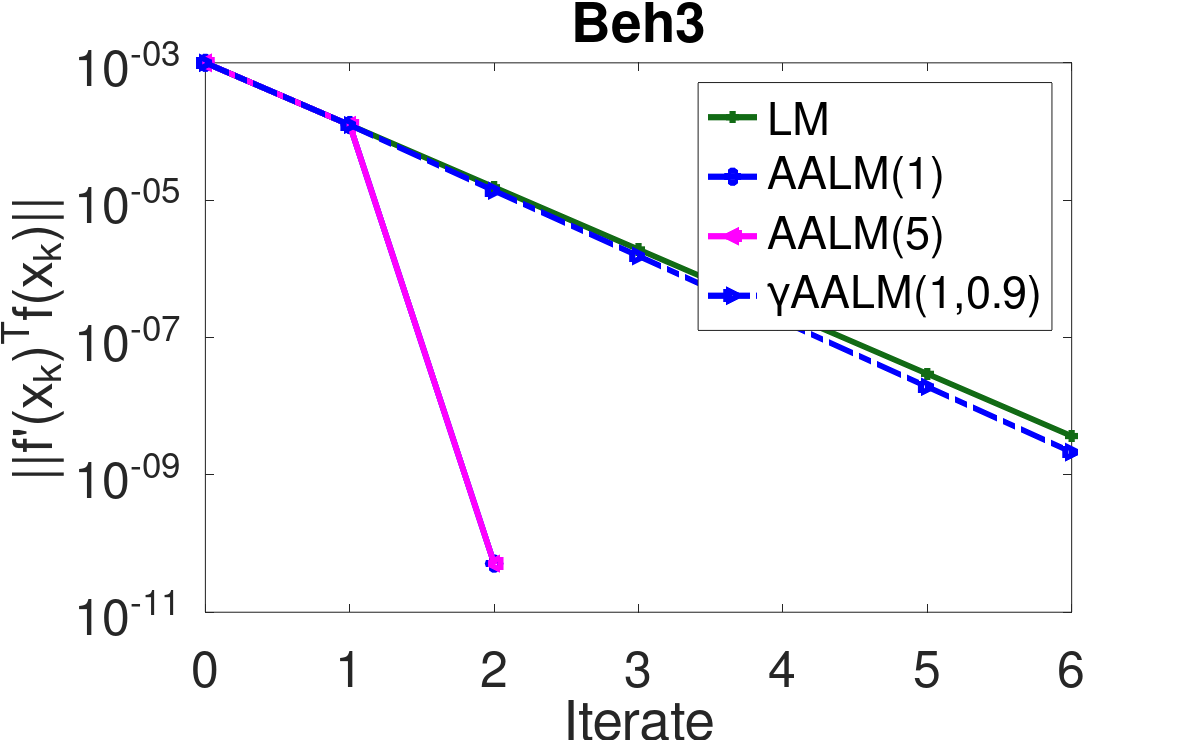}
    \caption{Residual history for test function Beh3. Dash-dotted lines correspond to $\gamma$-safeguarded AALM. As in Figure \ref{fig:beh1-and-2}, $\gamma$AALM(1,0.9) and $\gamma$AALM(5,0.9) coincide. $\gamma$AALM(5,0.9) is therefore omitted.}
    \label{fig:beh3}
\end{figure}
\begin{figure}[H]
    \centering
    \includegraphics[width=0.49\linewidth]{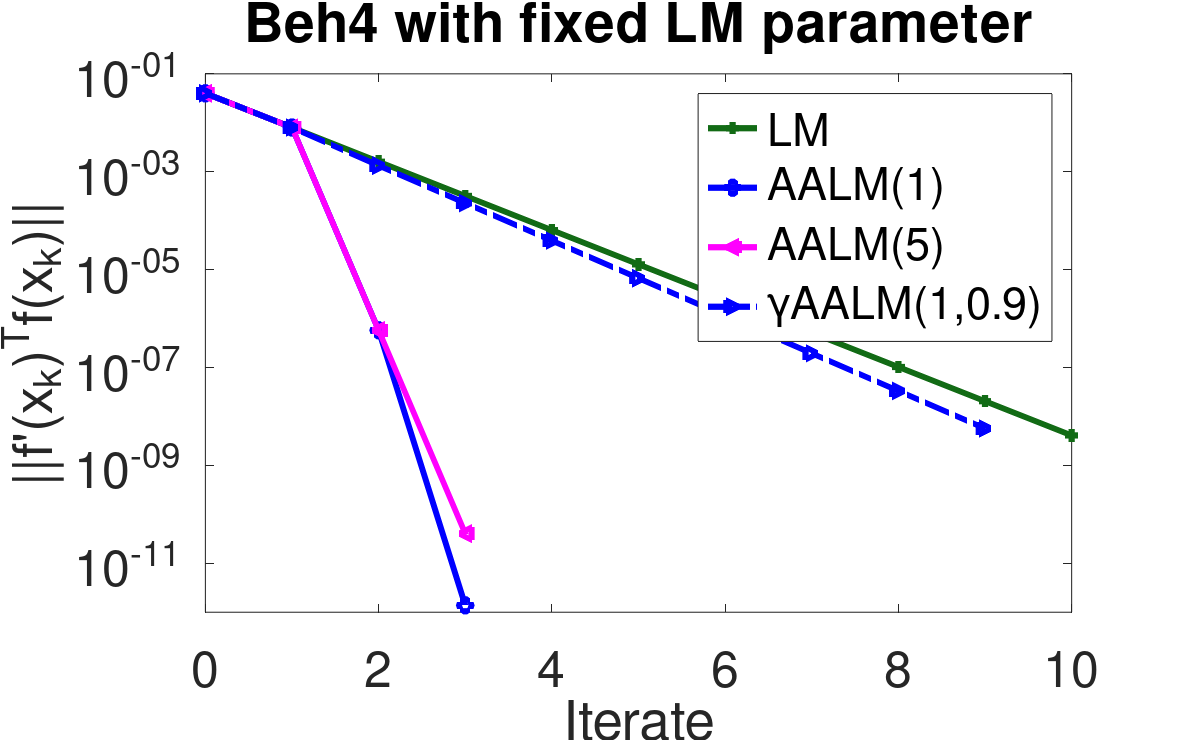}
    \includegraphics[width=0.49\linewidth]{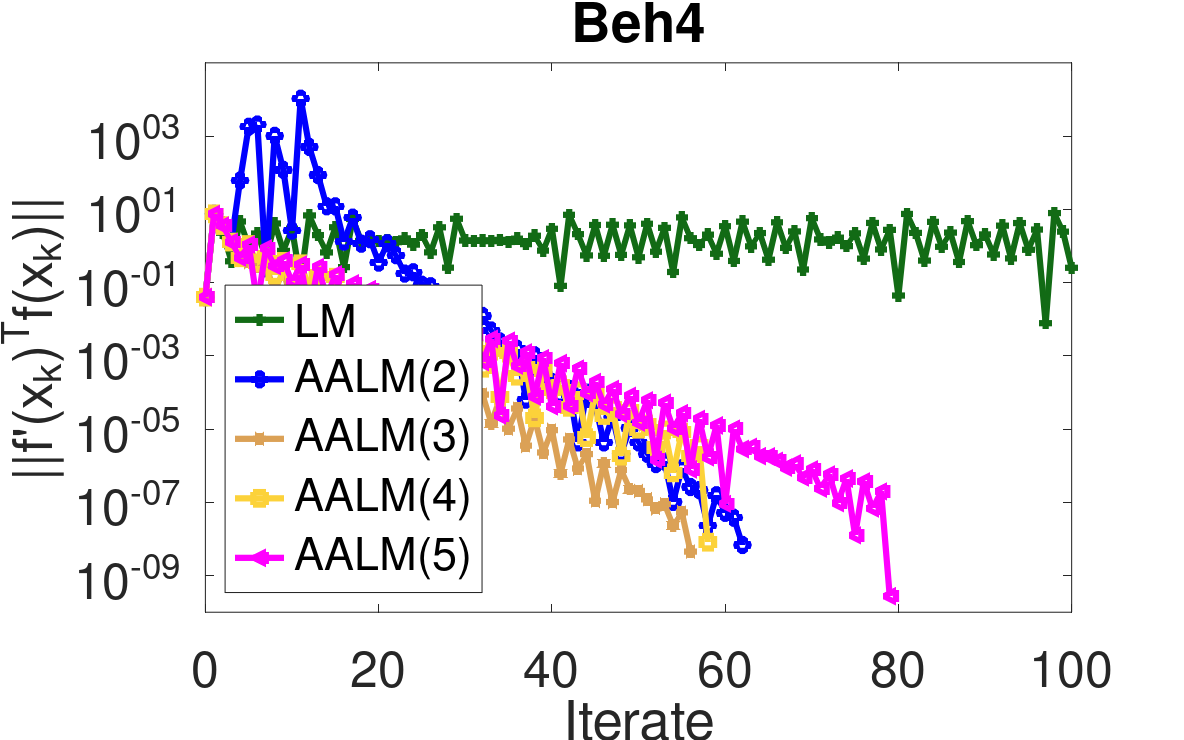}
    \caption{Left: Residual history for test function Beh4 with the LM parameter $\mu_k = 5$ for all $k\geq 0$. Dash-dotted lines correspond to AALM with $\gamma$-safeguarding. $\gamma$AALM(5,0.9) is omitted since it coincides with $\gamma$AALM(1,0.9).} Right: Residual history for test function Beh4 with LM parameter $\mu_k = \|f'(x_k)^Tf(x_k)\|$. 
    \label{fig:beh4}
\end{figure}

\section{Conclusion}

We have presented an analysis of Anderson acceleration applied to the class of nonlinear solvers known as perturbed Newton methods. We proved a local convergence result for Anderson accelerated perturbed Newton methods (AApNMs) with $\gamma$-safeguarding under the assumption of 2-regularity. This result extends previous analysis by the author of Anderson acceleration with algorithmic depth 1 applied to Newton's method for singular problems. A corollary of this analysis provides a novel result for Anderson acceleration applied to the Levenberg-Marquardt (LM) method for singular problems. Further, we showed that $\gamma$-safeguarded Anderson acceleration preserves the local order of convergence of a superlinearly convergent fixed-point iteration, whereas standard Anderson acceleration decreases the order of convergence when applied to such a problem. We demonstrated AApNMs using problems in the literature, and we showed that Anderson accelerated LM (AALM) is viable even when the nonlinear least-squares problem has a non-zero minimum, though further study is needed to fully explain the behavior of AALM applied to such problems when there is a non-unique solution.   

\section*{Acknowledgments}

The author expresses appreciation to Sara Pollock for initial feedback on this project, and to the anonymous referee whose thoughtful comments lead to significant improvements to this article. 

\bibliography{refs}
\bibliographystyle{siam}

\end{document}